\renewcommand{\le}{\leqslant}
\renewcommand{\ge}{\geqslant}
\newtheorem{theorem}{Theorem}
\newtheorem{lemma}[theorem]{Lemma}
\newtheorem{corollary}[theorem]{Corollary}
\newtheorem{remark}[theorem]{Remark}
\theoremstyle{definition}
\newtheorem{example}{Example}
\newtheorem{problem}{Problem}
\def\ceil#1{\left\lceil #1 \right\rceil}
\newcommand{\Hall}{\mathbf{m}}
\newcommand{\Al}{\Sigma}
\newcommand{\Ex}{\mathit{D}}
\newcommand{\ex}{\mathit{d}}
\begin{document}
\title{Avoiding square-free words on free groups}

\author{Golnaz Badkobeh\footnote{Department of Computing, Goldsmiths University of London, United Kingdom. \texttt{G.Badkobeh@gold.ac.uk}}
\and Tero Harju\footnote{Department of Mathematics and Statistics, University of~Turku, Finland. \texttt{harju@utu.fi}}
\and Pascal Ochem\footnote{LIRMM, CNRS, Universit\'e de Montpellier, France. \texttt{ochem@lirmm.fr}}
\and Matthieu Rosenfeld\footnote{LIRMM, CNRS, Universit\'e de Montpellier, France. \texttt{rosenfeld@lirmm.fr}}
}

\maketitle

\begin{abstract}
We consider sets of factors that can be avoided in square-free words on two-generator free groups.
The elements of the group are presented in terms of $\{0,1,2,3\}$ such that $0$ and $2$ 
(resp.,$1$ and $3$) are inverses of each other
so that $02$, $20$, $13$ and $31$ do not occur in a reduced word. 
A Dean word is a reduced word that does not contain occurrences of $uu$ for any nonempty~$u$.
Dean showed in 1965 that there exist infinite square-free reduced words.
We show that if $w$ is a Dean word of length at least~59
then there are at most six reduced words of length~3 avoided by $w$. 
We construct an infinite Dean word avoiding six reduced words of length~3.
We also construct infinite Dean words with low critical exponent and avoiding fewer reduced words of length~3.
Finally, we show that the minimal frequency of a letter in a Dean word is $\frac{8}{59}$
and the growth rate is close to 1.45818.
\end{abstract}

\section{Introduction}

Axel Thue~\cite{Thue:12} showed in 1912 that there exists an infinite square-free word
$\Hall$ over the alphabet $\{0,1,2\}$ that avoids occurrences of the words 
$010$ and $212$ of length~3. We shall consider a related question on the
free group generated by two elements.

For the basic notions in combinatorics on words, we refer to Lothaire~\cite{Lothaire, Lothaire2}.
Let
\[
\Al_k=\{0,1, \ldots, k-1\}
\]
be an alphabet of $k$ letters. We denote by $\Al^*$ 
the set of all finite words over an alphabet $\Al$.
We are interested solely in the words of $\Al_3^*$ and $\Al_4^*$.
The set of infinite words $w\colon \mathbb{N} \to \Al$ over 
an alphabet $\Al$, 
represented here as infinite strings $w(1)w(2) \cdots$, is denoted
by $\Al^\omega$. 

The length, i.e., the number of occurrences of letters, of a word $w \in \Al^*$ is denoted by $|w|$. 
 
A factor $v$ of $w$ is \emph{right-special} in $w$ if there exist at least two distinct letters $a$ and $b$ such that $va$ and $vb$ both are factors of $w$.

A word $w \in \Al^* \cup \Al^\omega$ is said to \emph{avoid}
another word $v$, if $v$ is not a \emph{factor} (i.e., a finite contiguous subword) of $w$. 
Furthermore, $w$ is \emph{square-free} if it avoids  all nonempty words of the form $vv$. 
A morphism, i.e., a substitution of letters to words,
$h\colon \Al^* \to \Delta^*$ is said to be \emph{square-free}, if it preserves square-freeness,
i.e., if $h(w)$ is square-free for all square-free words $w$. 

\begin{example}
The word $w=01210120210121021202101202120$ of length 29 is, up to permutations of the letters,
the longest ternary square-free word $w \in \Al_3^*$ avoiding three square-free words of length~3.
It avoids the words $010,020,201$.
There are infinite square-free ternary words that avoid two words of length~3; the Hall-Thue word
$\Hall$ as defined below is an example of these. 
\qed\end{example}

The following result is due to Crochemore~\cite{Crochemore}.

\begin{theorem}\label{Crochemore}
A morphism $\alpha\colon \Al_3^* \to \Delta^*$ is square-free if and only if it preserves all square-free words of length 5.
\end{theorem}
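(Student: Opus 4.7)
The forward implication holds by definition: a square-free morphism preserves every square-free word, and in particular every square-free word of length~$5$.

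For the converse, I argue by contradiction. Assume that $\alpha(v)$ is square-free for every square-free $v \in \Al_3^*$ with $|v|\le 5$, and let $w = a_1 \cdots a_n \in \Al_3^*$ be a square-free word of minimal length such that $\alpha(w)$ contains a square $uu$; by hypothesis $n \ge 6$. Writing $\alpha(w) = p\,uu\,q$, the minimality of $|w|$ forces $|p| < |\alpha(a_1)|$ and $|q| < |\alpha(a_n)|$, since otherwise dropping $a_1$ or $a_n$ from $w$ would leave a strictly shorter square-free word whose image still contains the square $uu$.

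The key step is to examine the midpoint of the square. Let $a_k$ be the letter whose image contains the position where the first copy of $u$ ends and the second begins. Matching the two occurrences of $u$ against the letter boundaries of $\alpha(w)$ yields two factorizations of $u$,
\[
 u = \alpha(a_1 \cdots a_j)\,r = s\,\alpha(a_{j+1} \cdots a_{k-1})\,t,
\]
in which $r$, $s$, $t$ are short prefixes or suffixes of single-letter images. Comparing these two factorizations of $u$, and applying the symmetric analysis to the second copy of~$u$, forces a strong alignment between an initial block of $w$ and a shifted block appearing further inside~$w$. Since $w$ itself is square-free this alignment cannot be exact, so the ``correction'' pieces $r$, $s$, $t$ must be responsible for a square already contained in the image of some short contiguous factor of~$w$; that factor then violates the hypothesis, giving the contradiction.

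The technical crux, and main obstacle, is to certify that this short factor has length at most~$5$ rather than some larger constant. This requires a case analysis on the size of $|u|$ relative to $\max_{a}|\alpha(a)|$ and on the alignment of the two copies of $u$ with letter boundaries inside $\alpha(w)$. The analysis leans on the strong combinatorial structure of ternary square-free words on short factors --- notably that any four consecutive letters of such a word already use all three letters of~$\Al_3$, and that the short factors of the language admit very few extensions --- which lets the book-keeping collapse to at most five letters. This is precisely where Crochemore's original argument puts its weight, and the choice of the constant~$5$ reflects the tightness of these short-range constraints.
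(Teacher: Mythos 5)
The paper does not prove this statement at all: it is quoted as a known result of Crochemore, with a citation. So the only question is whether your argument stands on its own, and it does not. The forward direction and the minimal-counterexample setup for the converse are fine: taking $w$ of minimal length with $\alpha(w)$ containing a square $uu$ does force $|p|<|\alpha(a_1)|$ and $|q|<|\alpha(a_n)|$, so the square spans essentially all of $\alpha(w)$. But from that point on you only describe what a proof would do. The ``strong alignment'' between a block of $w$ and a shifted block, the claim that the correction pieces $r,s,t$ must create a square inside the image of a \emph{short} factor, and above all the reduction of ``short'' to ``at most $5$'' are asserted, not derived --- and you say so yourself (``this is precisely where Crochemore's original argument puts its weight''). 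That step is the entire content of the theorem. Without it your argument does not even establish that \emph{some} finite test length suffices, let alone the value $5$; and indeed Crochemore also showed that for alphabets of four or more letters no such uniform bound exists, so the conclusion genuinely depends on ternary-specific synchronization properties that never appear in your sketch (injectivity of $\alpha$ on short square-free words, absence of ``pre-squares'', and the fact that these are already forced by square-freeness on words of length $3$).

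Concretely, the missing ingredient is a synchronization lemma: if $\alpha$ preserves square-freeness of all square-free words of length $3$ (or $5$), then occurrences of $\alpha(a)$ inside $\alpha(w)$ can only align with the ``intended'' letter boundaries, up to a bounded ambiguity. It is this lemma that converts the two factorizations of $u$ you write down into either a square in $w$ itself or a square inside $\alpha(v)$ for a factor $v$ of $w$ with $|v|\le 5$. As written, the proposal is a roadmap to Crochemore's proof rather than a proof, and the decisive quantitative step is absent.
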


The simplest square-free morphism is due to Thue~\cite{Thue:06} ; see Lothaire~\cite{Lothaire},
\[
h_{T}(0)=01201\,, \
h_{T}(1)=020121\,, \
h_{T}(2)=0212021\,.
\]

The \emph{Hall-Thue} word $\Hall=\tau^\omega(0)$, 
also known as a \emph{variation of Thue-Morse word}~\cite{Blanchet-Sadri:integers},
is obtained by iterating the (Hall) morphism~\cite{Hall}:
\[
\tau(0)=012\,, \
\tau(1)=02\,, \
\tau(2)=1\,.
\]
Thus 
$
\Hall=012 02 1 012 1 \cdots
$ 
The morphism $\tau$ is not square-free since $\tau(010)$ contains a square $2020$.
However, the Hall-Thue word $\Hall$ is square-free as was shown by Thue~\cite{Thue:06} in 1906. 
The word $\Hall$ avoids the words $010$ and $212$ as is immediate from the form of $\tau$.

\section{Dean words}

While considering \emph{reduced words} of a free group on two generators,
we work on the alphabet $\Al_4$ by considering $0$ and $2$ (resp.\ $1$ and $3$)
as inverses of each other. Thus a reduced word $w \in \Al_4^*$ does not have factors from the set 
$\{02,20,13,31\}$. In other words, the even and odd letters alternate in the reduced words.

A reduced word $w \in \Al^*_4$ is said to be a \emph{Dean word} if it is square-free. Dean words are moderately numerous as suggested in Table~\ref{DeanNumber}.
 
 \begin{table}[htp]
\begin{center}
\begin{tabular}{|c|c||c|c||c|c||c|c||c|c|}\hline
n&\#&n&\#&n&\#&n&\# &n&\#\cr\hline
1&4&2&8&3&16&4&24&5&40\cr\hline
6&64&7&104&8&144&9&216&10&328\cr\hline
11&496&12&720&13&1072&14&1584&15&2344\cr\hline
16&3384&17&4952&18&7264&19&10632&20&15504\cr\hline
\end{tabular}
\end{center}
\caption{The number of Dean words, i.e., square-free reduced words over $\Al_4$.}\label{DeanNumber}
\end{table}

Dean~\cite{Dean} proved in 1965 the existence of an infinite square-free reduced word.

\begin{theorem}\label{Dean}
There exists an infinite Dean word.
\end{theorem}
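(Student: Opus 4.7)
The plan is to produce an infinite Dean word as the image $h(\Hall)$ of the Hall-Thue word $\Hall$ under a suitable morphism $h\colon \Al_3^* \to \Al_4^*$. Two conditions are needed: (i) $h(\Hall)$ is reduced, i.e.\ even and odd letters alternate and no factor from $\{02,20,13,31\}$ occurs; (ii) $h(\Hall)$ is square-free.

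For (i), I would require each image $h(i)$ to be an alternating (hence internally reduced) word that starts with an even letter and ends with an odd letter. With this uniform boundary condition, every seam between two consecutive images consists of an odd letter followed by an even letter, and such a pair can never lie in $\{02,20,13,31\}$ (whose elements are letter--inverse and therefore have the same parity). So concatenating any images automatically preserves the reduced property, and no global case analysis is required at the boundaries.

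For (ii), Theorem~\ref{Crochemore} reduces the square-freeness of $h$ to a finite verification: it suffices to check that $h(u)$ is square-free for every square-free $u \in \Al_3^*$ with $|u|=5$. This is a short mechanical computation over a small finite list. Once $h$ passes both (i) and (ii), applying it to $\Hall$ — which is infinite and square-free by the result of Thue quoted above — yields an infinite square-free reduced word, i.e.\ an infinite Dean word, proving Theorem~\ref{Dean}.

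The main obstacle is the concrete existence of such an $h$. The parity constraint in (i) restricts each image to even length and to a prescribed alternation pattern, while square-freeness across image boundaries forces the three images to be sufficiently dissimilar; in particular, trivial choices like $h(i) = 0123$ for all $i$ immediately create squares. In practice, one searches by computer for a short uniform morphism whose images are alternating with the prescribed end-parities and which passes the Crochemore check on length-$5$ inputs; once such a morphism is exhibited, the argument above closes the proof.
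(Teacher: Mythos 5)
Your strategy is exactly the paper's second proof: choose a morphism $h\colon \Al_3^* \to \Al_4^*$, verify square-freeness by the finite Crochemore criterion (Theorem~\ref{Crochemore}), check reducedness at the seams, and apply $h$ to the square-free word $\Hall$. Your parity argument for reducedness (all images alternating, with a fixed start-parity and end-parity, so every seam is an odd--even pair and hence never in $\{02,20,13,31\}$) is clean and slightly slicker than the paper's check of all $h(ab)$ with $a \ne b$.

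However, there is a genuine gap: you never exhibit the morphism. The theorem is an existence statement, and your argument only reduces it to ``find an $h$ passing a finite test,'' a search you explicitly defer (``in practice, one searches by computer\dots once such a morphism is exhibited, the argument above closes the proof''). Nothing in your write-up guarantees a priori that the search succeeds --- indeed that success is essentially the content of the theorem, and you yourself note that naive candidates fail. The paper closes this gap by writing down a concrete witness, $h(0)=10$, $h(1)=32$, $h(2)=1230103012$, which passes the Crochemore check. It is also worth noting that the paper's third proof sidesteps computation entirely: starting from any infinite ternary square-free word, insert the letter $3$ in the middle of every occurrence of $02$ and $20$; the result is reduced by construction, and a square in the new word would force a square in the original word after deleting the $3$'s. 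That argument is fully self-contained and would be a simpler way to complete your proof.
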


We give here three simple proofs.

\begin{proof}[First Proof]
As noticed by Baker, McNulty and Taylor~\cite{BNT89}, the construction of Dean~\cite{Dean}
corresponds to the fixed point 
\[
f^\omega(0)=01210321012303210121032301230321\cdots
\]
of the simple morphism $f\colon \Al_4^* \to \Al_4^*$ defined by
\begin{equation}\label{Shallit}
f(0)= 01\,, \ f(1) = 21\,, \ f(2) = 03\,, \ f(3) = 23\,.
\end{equation}

Clearly, the iterated word $f^\omega(0)$ is reduced.
Its square-freeness can also be checked with the Walnut software~\cite{Walnut}, as communicated to us by Jeffrey Shallit~\cite{Shallit}:
\begin{verbatim}
eval dean1 "Ei,n (n>=1) & At (t<n) => DE[i+t]=DE[i+n+t]"
\end{verbatim}
Interestingly, $f^\omega(0)$ is a complete shuffle of $(02)^\omega$ and the paper folding sequence
$1131133111331331\cdots$ (after renaming $0$ to $3$); see Davis and Knuth~\cite{Davis-Knuth}
or Allouche and Shallit~\cite{Allouche-Shallit}.
The latter claim was also verified by Shallit using Walnut, or by
considering the substitution rules for the paper folding sequence:
\[
 11 \mapsto 1101\,, \ 01 \mapsto 1001\,, \ 10 \mapsto 1100\,, \ 00 \mapsto 1000\,.
\]

We can modify \eqref{Shallit} by combining the
values $f(1)$ and $f(2)$ to obtain $h\colon \Al_3^* \to \Al_4^*$ from the three letter alphabet:
\[
h(0)=01\,, \ h(1)=23\,, \ h(2)=2103\,.
\]	
The morphism $h$ is not square-free, since $h(212)=210(3232)103$ contains a square.
However, we can check by Theorem~\ref{Currie-thue} that $h(\Hall)$ is square-free.
\end{proof}

\begin{proof}[Second Proof]
Consider the morphism $h\colon \Al_3^* \to \Al_4^*$ given by the images of the letters,
\[
h(0)=10\,, \ 
h(1)=32\,, \ 
h(2)=1230103012\,.
\]
It is not difficult to show directly that $h$ is square-free, but Theorem~\ref{Crochemore} does it also quickly.
Since the words $h(ab)$, for different $a, b \in \Al_3$, are reduced, the infinite word $h(\Hall)$ is a Dean word. 
\end{proof}

\begin{proof}[Third Proof]
Let $w \in \Al_3^{\omega}$ be an infinite (ternary) square-free word. 
We construct a word $\overline{w} \in \Al_{4}^{\omega}$ by 
adding the letter $3$ in the middle of every occurrence of $02$ and $20$. Then also
$\overline{w}$ is square-free. Indeed, if there were a square 
$\overline{u}\overline{u}$ in $\overline{w}$,
the occurrences of $3$ would be aligned (in the same positions) in the two instances of $\overline{u}$, and
deleting the occurrences of $3$ would evoke a square $uu$ into $w$; a contradiction.
Clearly, $\overline{w}$ is reduced by its construction.
\end{proof}

A morphism $h\colon \Al_3^* \to \Al_4^*$ that preserves Dean words, is called a \emph{Dean morphism}.
For $h$ to be a Dean morphism, it suffices to check that $h(w)$ is square-free
for all square-free $w$ of length~5 (Theorem~\ref{Crochemore})
and checking that the words $h(ab)$, for different $a, b \in \Al_3$, are reduced.

One can apply the third proof of Theorem~\ref{Dean} to finite words.
Then applying this operation to the Thue morphism $h_T$, the images of which start with the letter 0,
and end with $1$, yields the following Dean morphism
\[
\overline{h}_T(0)=012301\,, \
\overline{h}_T(1)=03230121\,, \
\overline{h}_T(2)=0321230321\,.
\]
The reverse operation does not work; deleting the occurrences of 3 from a Dean word 
does not necessarily give a square-free ternary word.
%%%%%%%%%%%%%%%%%%%%%%%%%%%%%%%%%%%%%%%%%%%%%%%%%%%%%%%%%%%
\section{Sets of avoided words}
An avoided word is also called an \emph{absent} word. Moreover, $v$ is a \emph{minimal} absent word of $w$ if it is absent in $w$ and all its proper prefixes and suffixes occur in $w$.

For a Dean word $w \in \Al_4^* \cup \Al_4^\omega$, let $\Ex_l(w)$ denote the set of minimal absent words of length $l$ of $w$ and $\ex_l(w) = |\Ex_l(w)| \quad (\text{the size of } \Ex_l(w))$.

We are interested in the avoided (absent) words of length~3.
%(The question for the sets $\Ex_l(w)$ with $l \ge 4$ has a different nature
%as pointed out at the end of the paper.)

With the aid of a computer program, we find that the
longest Dean word that avoids the word $w=10$ of length~2 has length~58:
\[
3230123032301232123032301230321230123212303230123212301232 
\]

By considering the permutations of $\Al_4$ that preserve reduced words, we have

\begin{lemma}\label{max58}
Let $w$ be a Dean word with $|w| \ge 59$. Then $\ex_2(w)=0$.
\end{lemma}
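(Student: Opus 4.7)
The plan is to bootstrap from the single computer-verified fact immediately preceding the lemma: the longest Dean word avoiding the factor $10$ has length $58$. The lifting from $10$ to any other length-$2$ word is a pure symmetry argument.

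Let $G$ be the group of permutations of $\Al_4$ that preserve the inverse pairing $\{\{0,2\},\{1,3\}\}$. Since any letter permutation preserves square-freeness, each $\sigma\in G$, applied letterwise, sends Dean words to Dean words of the same length and carries factors to factors. The group $G$ has order $8$, generated for instance by the three involutions $(0\,2)$, $(1\,3)$, and $(0\,1)(2\,3)$. A direct orbit calculation shows that $G$ acts transitively on the eight reduced length-$2$ words $\{01, 10, 03, 30, 12, 21, 23, 32\}$: starting from $10$, the three generators immediately produce $12$, $30$, $01$, and a handful of further compositions reach the remaining four.

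Now suppose for contradiction that $w$ is a Dean word with $|w|\ge 59$ and $\ex_2(w)>0$. Pick a (reduced) length-$2$ word $v$ absent from $w$, and choose $\sigma\in G$ with $\sigma(v)=10$. Then $\sigma(w)$ is a Dean word of length $|w|\ge 59$ avoiding the factor $10$, contradicting the stated bound of $58$. Hence every reduced length-$2$ word occurs in $w$, so $\ex_2(w)=0$. The non-reduced length-$2$ words $02, 20, 13, 31$ are tautologically absent from every Dean word and play no role here, in keeping with the implicit restriction of $\Ex_l$ to the reduced setting.

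The only substantive obstacle is the base case for the factor $10$. This reduces to a finite backtracking search: a Dean word has at most two valid one-letter extensions at each step (reducedness forbids the inverse of the previous letter, and square-freeness typically forbids one more), so the tree of Dean words avoiding $10$ is small enough to exhaust, and the length-$58$ word displayed in the text provides the matching lower bound. Everything else is the clean symmetry reduction above, which is the content of the phrase \emph{``by considering the permutations of $\Al_4$ that preserve reduced words.''}
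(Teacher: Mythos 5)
Your proof is correct and follows the same route as the paper: a computer search establishing that the longest Dean word avoiding the factor $10$ has length $58$, followed by the observation that the order-$8$ group of permutations of $\Al_4$ preserving reduced words acts transitively on the eight reduced length-$2$ words. Your orbit computation and the remark about the non-reduced pairs $02,20,13,31$ make explicit what the paper leaves implicit.
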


We have already seen, by the third proof of Theorem~\ref{Dean}, that every
ternary square-free word $w\in \Al_3^*$ gives rise to a Dean word $\overline{w} \in \Al_4^*$.
There are 15504 Dean words of length~20 divided into 709 different sets of avoided words including
the empty set. Though up to the eight permutations of the letters
that preserve reduced words (see the Appendix), there are `just' 1938
Dean words of length~20.

The word $w=01030121012321230323$ of length~20 is an example of a Dean word that 
accommodates all reduced words of length~3, i.e., $\ex_3(w) = 0$. The length~20 is optimal:
a Dean word of length~19 always avoids some word of length~3.
%Note that $w$ is not `de Bruijn -type' since it contains repetitions: $012$ and $123$ both occur in two positions in $w$. 
Note that $w$ is not `de Bruijn -type' since both $012$ and $123$ occur twice in $w$. 

\begin{example}
The Hall-Thue word $\Hall$ avoids $010$ and $212$.
The corresponding Dean word $\overline{\Hall}$ has 
$\Ex_3(\overline{\Hall})=\{010, 030, 212, 232\}$ simply since the letter $3$ is never added between two $0$'s or two $2$'s.
\qed\end{example}

The following two simple lemmas deal with \emph{repetitions} of the form $abvab$ for a pair $ab$.
They are verified by the aid of a computer.
Up to permutations, the words $010321230$ and $012303210$ are the maximum length Dean words without repetitions of pairs.

\begin{lemma}\label{norep}
A Dean word that has no repetitions of pairs, has length at most 9. 
\end{lemma}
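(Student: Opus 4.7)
My plan is a straightforward pigeonhole argument on the consecutive pairs appearing in a Dean word.

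First, I would enumerate which pairs $ab \in \Al_4^2$ can actually occur as a factor of a Dean word $w$. Since $w$ is reduced, the four inverse pairs $02, 20, 13, 31$ are excluded. Since $w$ is square-free, no letter can repeat, which excludes the four pairs $00, 11, 22, 33$. This leaves exactly $16 - 4 - 4 = 8$ admissible pairs, namely $01, 03, 10, 12, 21, 23, 30, 32$.

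Next, suppose $w$ is a Dean word with $|w| = n \geq 10$. Then $w$ contains $n - 1 \geq 9$ consecutive length-$2$ factors, each of which is one of the $8$ admissible pairs. By the pigeonhole principle, some pair $ab$ must occur at (at least) two distinct starting positions $i < j$. I would then show that necessarily $j \geq i + 3$: indeed $j = i + 1$ would force $b = a$ (impossible since $a \neq b$ for admissible pairs), and $j = i + 2$ would produce the factor $abab$, contradicting square-freeness. Hence $w[i \mathinner{.\,.} j+1] = ab \cdot v \cdot ab$ with $v = w[i+2 \mathinner{.\,.} j-1]$ nonempty, which is precisely a repetition of the pair $ab$.

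Contrapositively, a Dean word with no repetition of pairs satisfies $|w| \leq 9$, proving the lemma. There is really no obstacle here beyond carefully handling the small-distance cases in step three; the bound $9$ is tight as the authors exhibit with $010321230$ and $012303210$, which confirms that the counting is not wasteful.
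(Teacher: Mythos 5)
Your proof is correct, and it is genuinely different from what the paper does: the authors simply state that Lemma~\ref{norep} is ``verified by the aid of a computer,'' whereas you give a short hand proof. Your counting is right --- reducedness kills $02,20,13,31$ and square-freeness kills $00,11,22,33$, leaving exactly $8$ admissible pairs, while a word of length $10$ exposes $9$ consecutive pairs --- and your case analysis on the gap between the two occurrences ($j=i+1$ forces $a=b$; $j=i+2$ forces the square $abab$; $j\ge i+3$ yields $ab\,v\,ab$ with $v$ nonempty) is exactly what is needed to turn the pigeonhole collision into a repetition in the sense $abvab$ used by the paper. Your argument in fact explains \emph{why} the bound is $9$: equality forces all eight admissible pairs to appear exactly once, which is precisely what the extremal words $010321230$ and $012303210$ do (their consecutive pairs are $01,10,03,32,21,12,23,30$). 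The trade-off is that the computer check also identifies the extremal words up to permutation, which your argument does not by itself, but for the statement as given your proof is complete, more elementary, and more informative than the paper's verification.
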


A reduced word, $w$ is \emph{extendable} if there is at least a letter $a \in \Al_4$, such that $wa$ is a reduced word, $wa$ ia called the extension of $w$.
%An \emph{continuation} of a reduced word $w$ is a reduced word $wa$ with $a \in \Al_4$.
Clearly, any reduced word has two extentions.

\begin{lemma}\label{norep:2}
Let $w$ be a Dean word that has only one right-special factor of length 2.
%a unique pair $ab$ with two extentions, $aba$ and $abc$, occurring in $w$. 
Then $|w|\le 23$. 
\end{lemma}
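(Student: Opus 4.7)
The plan is to exploit the near-deterministic structure the hypothesis forces on the sequence of length-2 factors of $w$. Write $xy$ for the unique right-special length-2 factor of $w$ and $L$ for the set of the eight reduced length-2 words over $\Al_4$. For every $ab \in L \setminus \{xy\}$ that occurs in $w$, all occurrences of $ab$ except possibly the final one are followed by a single fixed letter $\delta(ab)$, which defines a partial function $\delta\colon L \setminus \{xy\} \to \Al_4$. The sequence of length-2 factors of $w$, read in order, is then a walk in the graph on $L$ carrying the single out-edge $ab \to b\,\delta(ab)$ at every vertex $ab \ne xy$ and two out-edges at $xy$; the walk is deterministic at every vertex except $xy$.

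First I would apply the eight permutations of $\Al_4$ that preserve reducedness (listed in the appendix); their induced action on $L$ is transitive (the stabilizer of $01$ is trivial and the group has order $8 = |L|$), so it suffices to treat the single canonical case $xy = 01$. Because the walk is deterministic away from $xy$, each of the two reduced extensions of $xy$ determines a fixed forward sequence, eventually periodic since $|L| = 8$. Consequently $w$ is completely described by its initial length-2 factor, the list of extensions chosen at successive visits to $xy$, and its total length.

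Square-freeness then provides the cap: a deterministic cycle of $\delta$ of length $\ell$ cannot be traversed twice in a row without producing a square of period $\ell$, and two visits to $xy$ using the same extension force the ensuing factor sequences to coincide, again producing a square as soon as those sequences are long enough. These two principles bound both the number of visits to $xy$ and the length of each deterministic segment, reducing the lemma to a finite and quite restricted case analysis over the $2^7$ possible choices of $\delta$ on the seven remaining length-2 factors. Carried out by computer in the spirit of Lemma~\ref{norep}, this enumeration returns the bound $23$ together with an extremal example of that length. The main obstacle, and the reason for automating the check, is the sheer size of the case split rather than any single subtle step.
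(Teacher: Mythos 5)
Your argument is correct, but it takes a genuinely different route from the paper, which offers no structural proof at all: Lemmas~\ref{norep} and~\ref{norep:2} are simply ``verified by the aid of a computer,'' i.e., by a direct backtracking search over Dean words having at most one right-special pair of letters, with the extremal length-23 word exhibited in Example~\ref{ex:Palind}. You instead extract the deterministic-walk structure on the de Bruijn graph of the eight reduced pairs, use the order-8 symmetry group to fix the special pair (your transitivity computation is right), and derive finiteness from square-freeness before delegating the sharp constant to a machine. This buys an a priori explanation of why such words must be short: two consecutive visits to $xy$ choosing the same extension yield an immediate square, so the extension sequence at $xy$ alternates between the two choices and four alternating segments already repeat, while a deterministic stretch cannot wind around a cycle of the pair graph twice; the paper's bare search buys implementation simplicity and an easier-to-trust verification. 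One step you should tighten: ``producing a square as soon as those sequences are long enough'' is vague as stated. The precise claim is that if $xy$ occurs at positions $i<j$ and the forced agreement $w[i+t]=w[j+t]$ persists for $j-i$ positions beyond $j$, then $w$ contains a square of period $j-i$, and the agreement lasts exactly until the extension choices at successive visits to $xy$ first differ --- which is what caps the number of visits. With that spelled out, your reduction to a finite enumeration (over the $2^7$ assignments of $\delta$, the initial pair, and the alternating extension sequence at $xy$) is legitimate, and the computer returns the bound 23 either way.
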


\begin{example}\label{ex:Palind}
The word $w=01032301032123010323010$ is a longest Dean word with only
one right-special factor of length 2, $v=32$.
We have $\ex_2(w)=0$ and $\Ex_3(w)=\{012,030,101,121,210,232,303\}$ of seven elements.
\qed\end{example}

A prefix $v$ of $w$ is \emph{proper} if $|v| < |w|$.

\begin{lemma}\label{lemm:24}
Let $w$ be a Dean word of length $|w| \ge 24$ such that every reduced pair occurs in a proper prefix of $w$.
Then $\ex_3(w) \le 6$.
\end{lemma}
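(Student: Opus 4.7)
The plan is a counting argument. In $\Al_4^*$ there are exactly $8$ reduced pairs, and each admits two reduced extensions, giving $16$ reduced words of length~$3$. By hypothesis every pair occurs in a proper prefix of $w$, so each pair is followed by at least one letter in $w$ and hence contributes at least one of its two length-$3$ extensions as a factor; this already guarantees $8$ reduced length-$3$ factors of~$w$. Moreover, since every pair appears in $w$, any absent reduced length-$3$ word has both its length-$2$ prefix and suffix occurring in $w$, so it is automatically minimal absent. Thus $\ex_3(w) \le 16 - N$, where $N$ is the number of distinct reduced length-$3$ factors of $w$, and it suffices to show $N \ge 10$, i.e., that $w$ has at least two right-special factors of length~$2$.

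The case of exactly one right-special pair is immediately excluded by Lemma~\ref{norep:2} since $|w| \ge 24$. To rule out zero right-special pairs, suppose to the contrary that no pair is right-special. Then for each of the $8$ pairs $ab$ there is a unique letter $c$ with $abc$ a factor of $w$, so the map $T(ab) = bc$ defines a deterministic transition on the $8$ reduced pairs, and the sequence of consecutive pairs $p_k = w(k)w(k+1)$ of $w$ satisfies $p_{k+1} = T(p_k)$. By pigeonhole applied to $p_1, \ldots, p_9$, there exist indices $1 \le i < j \le 9$ with $p_i = p_j$; setting $q = j - i \le 8$, determinism yields $p_{k+q} = p_k$ for every $k \ge i$, hence $w(k+q) = w(k)$ whenever both sides are defined. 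Since $|w| \ge 24 \ge i + 2q - 1$, the factor $w(i) \cdots w(i+2q-1)$ is then a square, contradicting the Dean property.

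Therefore $w$ has at least two right-special factors of length~$2$, each contributing an extra length-$3$ extension, giving $N \ge 10$ and $\ex_3(w) \le 6$. The main difficulty is combining Lemma~\ref{norep:2} with the pigeonhole-plus-periodicity argument needed to handle the zero right-special case; once at least two right-special pairs are in hand, the counting is routine.
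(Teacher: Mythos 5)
Your proof is correct and follows essentially the same route as the paper: bound $\ex_3(w)$ by counting extensions of the eight reduced pairs, rule out the one-right-special case via Lemma~\ref{norep:2}, and rule out the zero-right-special case by showing that unique extensions force a square. The only cosmetic difference is that for the latter case you spell out the pigeonhole-plus-determinism argument directly instead of invoking Lemma~\ref{norep}, which encapsulates the same fact.
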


\begin{proof}
By the hypothesis, each of the eight reduced pairs $ab$ has at least one extension in $w$. 
Hence $\ex_3(w) \le 8$.

Suppose that $\ex_3(w)= 8$. Then no reduced pair $ab$ is a right-special factor in~$w$. %unique continuation in~$w$.
However, by Lemma~\ref{norep}, each Dean word of length~10 has a repetition of some pair $abzab$. 
By the uniqueness of the extensions, this would eventually result in a square
$abz{\cdot}abz$ in the prefix of $w$ of length~16.

The case $\ex_3(w)= 7$ allows that exactly one pair is a right-special factor in $w$, and
the other seven pairs are not right-special.
By Lemma~\ref{norep:2}, $|w|\le 23$, and the claim follows.
\end{proof}

The bound 24 in Lemma~\ref{lemm:24} is optimal as witnessed by Example~\ref{ex:Palind}.

\begin{corollary}\label{inf:6}
If $w$ is an infinite Dean word then $\ex_3(w) \le 6$.
\end{corollary}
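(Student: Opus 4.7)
The plan is to reduce the infinite case to the finite Lemma~\ref{lemm:24} by passing to a sufficiently long prefix of $w$. First I would observe that every prefix of $w$ of length at least $59$ is itself a Dean word, so by Lemma~\ref{max58} it contains each of the eight reduced pairs. Consequently, all eight reduced pairs appear in $w$, and since there are only eight of them, all of them already appear in some finite prefix $w'$ of $w$.

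Next, I would fix any prefix $w''$ of $w$ with $|w''| \ge \max(24,\,|w'|+1)$. By construction, every reduced pair of length $2$ occurs in the proper prefix $w'$ of $w''$, so $w''$ satisfies the hypothesis of Lemma~\ref{lemm:24} and therefore $\ex_3(w'') \le 6$.

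The only remaining point is to transfer the bound from $w''$ to the infinite word $w$. Since there are only $16$ reduced words of length $3$, each length-$3$ factor that ever appears in $w$ appears at some bounded position; by enlarging $w''$ further if needed I can assume that every length-$3$ reduced factor of $w$ already appears in $w''$. Then a reduced word $v$ of length $3$ is absent from $w$ iff it is absent from $w''$, and the two length-$2$ infixes of $v$ (which are reduced pairs) appear in $w''$ iff they appear in $w$, since both words contain all eight pairs. Hence $\Ex_3(w)=\Ex_3(w'')$, and so $\ex_3(w)\le 6$.

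I do not foresee a genuine obstacle; the only thing requiring care is not to conflate ``absent'' with ``minimal absent,'' which is why the prefix must be chosen long enough to witness every length-$3$ factor of $w$, not merely every length-$2$ factor. Once that choice is made, Lemma~\ref{lemm:24} does all the work.
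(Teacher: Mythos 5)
Your proof is correct and follows the same route the paper intends: pass to a sufficiently long prefix in which all eight reduced pairs occur (via Lemma~\ref{max58}) and apply Lemma~\ref{lemm:24}. The extra care you take to distinguish ``absent'' from ``minimal absent'' by choosing the prefix long enough to witness every length-3 factor is exactly the detail the paper leaves implicit, and it is handled correctly.
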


By applying a computer search on all Dean words of length~59, we see that there are only four possible sets
of avoided Dean words, as specified in Theorem~\ref{four}.
The claim of Theorem~\ref{four} for words $|w| >59$ immediately follows from this result.
We shall show in Theorem~\ref{Actual3} that these sets are actual for infinite Dean words.

\begin{theorem}\label{four}
Let $w$ be a Dean word of length $|w| \ge 59$ with $\ex_3(w) = 6$.
Then $\Ex_3(w)$ is one of the following sets
\begin{align*}
S_1&=\{101,123,212,232,303,321\}\\
S_2&=\{012,030,101,121,210,232\}\\
S_3&=\{010,032,212,230,303,323\}\\
S_4&=\{010,030,103,121,301,323\}
\end{align*}
\end{theorem}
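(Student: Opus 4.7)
The plan is to prove the theorem by a finite computer search on Dean words of length exactly $59$, with the case $|w|\ge 60$ reduced to that length. Concretely, let $w$ be a Dean word with $|w|\ge 60$ and $\ex_3(w)=6$, and let $p$ denote its prefix of length $59$. Every factor of $p$ is a factor of $w$, so any reduced length-$3$ word absent in $w$ is also absent in $p$; by Lemma~\ref{max58} every reduced pair occurs in both $w$ and $p$, which makes the minimality condition on length-$3$ absent words automatic, so $\Ex_3(w)\subseteq \Ex_3(p)$. Assuming the length-$59$ case has been verified (in particular $\ex_3(p)\le 6$ for every Dean word $p$ of length $59$), this inclusion combined with $\ex_3(w)=6$ forces $\ex_3(p)=6$ and $\Ex_3(w)=\Ex_3(p)\in\{S_1,S_2,S_3,S_4\}$.

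For the length-$59$ case, I would enumerate all Dean words of length $59$ by depth-first search on the tree of reduced words, extending letter by letter and backtracking whenever appending a new letter creates a square. At each leaf the number of reduced length-$3$ words not occurring in $w$ coincides with $\ex_3(w)$, since all pairs are present (Lemma~\ref{max58}); the search records every absent set of size exactly $6$ and checks that it lies in $\{S_1,S_2,S_3,S_4\}$, while also verifying that the count never exceeds $6$. To make the search tractable, I would (i) use the $8$-element symmetry group of $\Al_4$ that preserves reduced words (see the Appendix) to fix the first two letters as $01$, shrinking the effective tree by a factor close to $8$; (ii) test for new squares only among suffixes ending at the appended letter, so that each extension is handled in essentially constant time; and (iii) prune any branch as soon as the number of absent reduced length-$3$ triples drops below $6$, since this count is monotone non-increasing under extension (any further descendant automatically satisfies $\ex_3\le 6$, so correctness is preserved).

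The structural part of the argument is short; the substance lies in the enumeration itself. Using Table~\ref{DeanNumber} and the growth rate near $1.458$ mentioned in the abstract, the number of Dean words of length $59$ is on the order of $10^{10}$, which is large but comfortably within reach after the prunings above. The principal obstacle is therefore engineering rather than mathematical: producing a well-tuned backtracking enumerator and inspecting the (small) list of $6$-element absent sets it outputs, which will turn out to consist exactly of $S_1,S_2,S_3,S_4$ together with their images under the $8$-element symmetry group (all of which are preserved set-wise by the representatives listed).
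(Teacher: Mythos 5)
Your proposal matches the paper's own argument: the paper proves Theorem~\ref{four} precisely by an exhaustive computer search over all Dean words of length~59 (noting that the case $|w|>59$ follows immediately by restriction to a length-59 prefix, exactly as in your reduction via Lemma~\ref{max58}). Your additional implementation details (symmetry reduction by the eight reduced-word-preserving permutations, suffix-only square tests, monotone pruning, and the explicit check that the absent count never exceeds~6 at length~59, which your prefix argument genuinely needs) are all sound refinements of the same computation.
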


As expected, the family of the sets $S_i$ in Theorem~\ref{four} is closed
under permutations that preserve reduced words.
E.g.,
the third set $S_3$ can be obtained from $S_1$ by
the permutation $(0\, 1)(2\, 3)$, and $(1\, 3)$ fixes $S_1$.
Moreover, each $S_i$ is closed under reversals of their elements.

\begin{remark}
For the length~58, there are still 12 different sets of cardinality six of avoided words. 
The drop in the number between the lengths~58 to~59 is dramatic
due to the fact that there are no longer any avoided words of length~2.
\end{remark}

The next proof relies on a strong result due to Currie~\cite{Currie:Thue}.

\begin{theorem}\label{Currie-thue}
Let $h \colon \Al_3^* \to \Delta^*$ be a morphism,
and let $w \in \Al_3^\omega$ be an infinite word with 
$\{010, 212\} \subseteq \Ex_3(w)$.
Then $h(w)$ is square-free if and only if $h$ preserves square-freeness of 
the factors of $w$ of length~7.
\end{theorem}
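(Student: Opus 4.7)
The forward direction is immediate: $h(u)$ is a factor of $h(w)$ for every factor $u$ of $w$, and any factor of a square-free word is square-free, so in particular $h$ sends each length-$7$ factor of $w$ to a square-free word. For the nontrivial converse, I plan to argue by contradiction. Suppose $h$ maps every length-$7$ factor of $w$ to a square-free word, yet $h(w)$ contains an occurrence of a square $UU$. Among all factors $v$ of $w$ such that $h(v)$ contains this occurrence, choose one of minimum length $n$ and write $v = a_1 a_2 \cdots a_n$. The hypothesis immediately forces $n \ge 8$, while minimality forces the square to straddle $v$: the first copy of $U$ must begin at some position inside $h(a_1)$ and the second copy must end inside $h(a_n)$, for otherwise the shorter factor $a_2 \cdots a_n$ or $a_1 \cdots a_{n-1}$ would already witness $UU$ in its image.

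Next, I would exploit the structural input $\{010, 212\} \subseteq \Ex_3(w)$. The square-free ternary words avoiding $010$ and $212$ form a very restricted family (locally resembling the Hall--Thue word $\Hall$), so the admissible prefixes $a_1 a_2 a_3 \cdots$ and suffixes $\cdots a_{n-1} a_n$ of $v$ lie in a small explicit list. Combining this with the periodicity relation that $h(v)$ has period $|U|$ on the interval occupied by $UU$, the plan is to show, in every admissible configuration, that the periodicity already holds one letter further in from one side, yielding a strictly shorter factor $v' \subsetneq v$ of $w$ whose image still contains $UU$ and contradicting the minimality of $v$.

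The main obstacle will be the combinatorial case analysis bridging the ``periodicity forced by $UU$'' and the ``forbidden-factor constraint on $v$''. Concretely, one must check that for every possible choice of the short prefix $a_1 a_2 a_3$ and short suffix $a_{n-2} a_{n-1} a_n$ compatible with $w$ avoiding $\{010, 212\}$, the arithmetic of $|U|$ against the image lengths $|h(a_i)|$ either localizes $UU$ inside the image of some length-$\le 7$ factor (ruled out by hypothesis) or else forces an equality of the form $h(a_1) = h(a_j)$ at the alignment between the two copies of $U$, allowing one to peel a letter off $v$. The threshold $7$ is presumably tight: at the Crochemore bound of Theorem~\ref{Crochemore}, the constraint on length-$3$ factors of $w$ has not yet propagated enough to kill an overhanging square, whereas at length $7$ the Hall--Thue-type rigidity of $w$ becomes strong enough to complete the reduction.
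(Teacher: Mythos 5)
The paper does not actually prove this statement: it is imported verbatim as a result of Currie (the sentence preceding it reads ``The next proof relies on a strong result due to Currie''), so there is no in-paper argument to compare yours against. That citation is itself a warning sign about the difficulty of what you are attempting. Your forward direction is fine, and your setup for the converse (minimal factor $v=a_1\cdots a_n$ of $w$ whose image contains the square, $n\ge 8$, the square straddling $h(a_1)$ and $h(a_n)$) is the standard opening move for test-set theorems of this kind. But everything after that is a statement of intent rather than a proof. The decisive content of the theorem is precisely the ``combinatorial case analysis'' that you defer: you must show that for an arbitrary morphism $h$ into an arbitrary alphabet $\Delta$ --- with no injectivity, no prefix/suffix-code property, and no control on the lengths $|h(a_i)|$ --- a square $UU$ overlapping at least eight image blocks forces either a square inside the image of a length-$7$ factor or a shorter witness. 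Your proposed mechanism, ``forces an equality of the form $h(a_1)=h(a_j)$ at the alignment between the two copies of $U$, allowing one to peel a letter off $v$,'' silently assumes that the period $|U|$ synchronizes with the block decomposition $h(a_1)h(a_2)\cdots h(a_n)$. Handling the desynchronized case is the crux of Crochemore's length-$5$ theorem and, a fortiori, of Currie's refinement; nothing in your sketch addresses it.

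There is also no justification offered for the specific threshold $7$, beyond an appeal to ``Hall--Thue-type rigidity.'' Note the direction of the improvement: Crochemore's Theorem~\ref{Crochemore} already gives a test set of \emph{all} square-free words of length $5$, whereas here one restricts to factors of the particular word $w$ (a strictly smaller set on which $h$ may fail to be square-free in general, as the morphism $g$ of Theorem~\ref{Actual3} shows) at the price of going up to length $7$. Establishing that $7$ suffices requires a genuine analysis of how squares in $h(w)$ interact with the forbidden factors $010$ and $212$, and that analysis is absent. As it stands the proposal is a plausible plan, not a proof; the theorem should either be proved in full (essentially reproducing Currie's argument) or cited, as the paper does.
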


We apply Theorem~\ref{Currie-thue} to the Hall-Thue word $\Hall$
which does avoid the words $010$ and $212$.

\begin{corollary}\label{cor:Currie}
Let $h \colon \Al_3^* \to \Al_4^*$ be a non-erasing morphism.
Then $h(\Hall)$ is square-free if and only if $h$ is square-free on the factors of $\Hall$ of length~7.
\end{corollary}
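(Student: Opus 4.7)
The plan is to apply Theorem~\ref{Currie-thue} directly, specializing $\Delta$ to $\Al_4$ and the infinite ternary word there to $\Hall$; the statement of the corollary then becomes a verbatim transcription of Theorem~\ref{Currie-thue}. The non-erasing hypothesis on $h$ does not enter the combinatorial argument at all; it is needed only to ensure that $h(\Hall)$ is itself infinite, so that the question ``is $h(\Hall)$ square-free?'' remains meaningful. The conclusion of Currie's theorem---equivalence with preservation of square-freeness on the factors of length~$7$---passes through unchanged in the special case $\Delta=\Al_4$.

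The sole hypothesis requiring verification is $\{010, 212\} \subseteq \Ex_3(\Hall)$. That $010$ and $212$ are absent from $\Hall$ was already recorded earlier in the paper, being immediate from the images $\tau(0)=012$, $\tau(1)=02$, $\tau(2)=1$: neither pattern appears inside a single $\tau(a)$, and a short case analysis on the concatenations $\tau(a)\tau(b)$ rules out their creation at block boundaries as well. To promote absence to \emph{minimal} absence---which is what membership in $\Ex_3$ demands---I also need the four length-$2$ prefixes and suffixes $01$, $10$, $12$, $21$ to occur in $\Hall$, and all of them are visible already in the displayed prefix $\Hall = 012\,02\,1\,012\,1\,\cdots$. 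With this the hypothesis of Theorem~\ref{Currie-thue} is satisfied and the corollary follows at once. I do not foresee any genuine obstacle: the only piece of care is the bookkeeping distinction between ``absent'' and ``minimal absent'' in the definition of $\Ex_3$, which is settled by inspecting the first few letters of $\Hall$.
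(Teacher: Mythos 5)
Your proposal is correct and follows the paper's own (very short) argument exactly: both simply instantiate Theorem~\ref{Currie-thue} with $w=\Hall$ and $\Delta=\Al_4$, using the fact that $\Hall$ avoids $010$ and $212$. Your extra care in checking that these are \emph{minimal} absent words (so that they genuinely lie in $\Ex_3(\Hall)$ as defined) is a small but legitimate refinement that the paper leaves implicit.
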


For the sake of completeness, the factors of length~7 of $\Hall$ are listed in the Appendix.

\begin{theorem}\label{Actual3}
There exists an infinite Dean word $w \in \Al_4^\omega$ such that  $\Ex_3(w)=S_i$, for each $i=1,2,3,4$.
\end{theorem}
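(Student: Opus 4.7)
The plan is to construct, for each $i \in \{1,2,3,4\}$, an infinite Dean word of the form $h_i(\Hall)$, where $h_i \colon \Al_3^* \to \Al_4^*$ is a carefully chosen morphism and $\Hall$ is the Hall--Thue word, using Corollary~\ref{cor:Currie} to verify square-freeness. A useful preliminary observation is that the eight-element group of letter permutations preserving reduced words acts transitively on $\{S_1, S_2, S_3, S_4\}$: for example, $(0\,1)(2\,3)$ sends $S_1$ to $S_3$, the double transposition $(0\,2)(1\,3)$ sends $S_1$ to $S_4$, and the $4$-cycle $(0\,3\,2\,1)$ sends $S_1$ to $S_2$. Consequently, it suffices to exhibit a single morphism $h$ such that $h(\Hall)$ realizes $S_1$, and then apply the appropriate letter permutations to the resulting infinite word to obtain infinite Dean words realizing each of the other three sets.

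To produce such an $h$, I would require three conditions: (a) for every pair of distinct $a,b \in \Al_3$, the word $h(ab)$ is reduced, so that $h(\Hall)$ is reduced; (b) $h$ is square-free on every factor of $\Hall$ of length~7 (these factors are tabulated in the Appendix), so that by Corollary~\ref{cor:Currie} the word $h(\Hall)$ is square-free; and (c) no element of $S_1$ occurs as a factor of $h(\Hall)$. Together, (a) and (b) force $h(\Hall)$ to be an infinite Dean word, and Corollary~\ref{inf:6} then gives $\ex_3(h(\Hall)) \le 6$. Combining this bound with (c) forces $\Ex_3(h(\Hall)) = S_1$, since $|S_1| = 6$ and we already have six absent reduced words of length~3.

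The verification of (c) is a finite check: since $h$ has bounded image lengths, every length-$3$ factor of $h(\Hall)$ lies inside $h(u)$ for some factor $u$ of $\Hall$ of length at most a small constant depending on the minimum image length, so one only has to inspect the images of finitely many short factors of $\Hall$. The main obstacle is the existence part of the argument, namely exhibiting a concrete morphism $h$ that satisfies all three conditions simultaneously: the constraint (c) that a prescribed six-element set of length-$3$ words be avoided while the other ten must all appear is restrictive and forces $h$ to be nontrivial. The search space of morphisms with short images is finite and amenable to a computer search, guided by the small-case data already collected in the paper (such as the enumeration of length-$20$ Dean words and their avoidance sets); a suitable candidate $h$ produced this way, together with the routine verifications of (a), (b), and (c), completes the proof.
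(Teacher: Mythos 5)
Your scaffolding coincides exactly with the paper's proof: construct a single morphism $h$ so that $h(\Hall)$ realizes $S_1$, use Corollary~\ref{cor:Currie} to get square-freeness from a finite check on the length-7 factors of $\Hall$, use Corollary~\ref{inf:6} to bound $\ex_3$ by $6$ and conclude $\Ex_3(h(\Hall))=S_1$, and then transport the result to the other three sets by letter permutations preserving reduced words. Your permutation computations ($(0\,3\,2\,1)\colon S_1\mapsto S_2$, $(0\,1)(2\,3)\colon S_1\mapsto S_3$, $(0\,2)(1\,3)\colon S_1\mapsto S_4$) are all correct, and the reduction of the theorem to conditions (a), (b), (c) on one morphism is logically sound.

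There is nevertheless a genuine gap: the theorem is an existence statement and you never produce the witness. You explicitly identify ``exhibiting a concrete morphism $h$'' as the main obstacle and then defer it to an unspecified computer search; without the morphism in hand, conditions (a)--(c) cannot be verified and the existence is not established. This omission is not routine. The paper's morphism is $g(0)=103230$, $g(1)=1030$, $g(2)=12$, and Theorem~\ref{unique} shows that, up to the permutation $(1\,3)$ and conjugation of the images, this is essentially the \emph{only} morphism $\Al_3^*\to\Al_4^*$ producing an infinite Dean word that avoids $S_1$ --- so the finite search space contains essentially one solution, and its nonemptiness is precisely the content of the theorem rather than a ``routine verification.'' Note also that the successful $g$ is not a square-free morphism (the image $g(010)$ contains a square), which is exactly why the weaker criterion of Corollary~\ref{cor:Currie} is indispensable; you state condition (b) correctly in this weaker form, but this only underlines that the existence of a suitable $h$ is not something one can wave at --- it must be exhibited and checked.
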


\begin{proof}
We obtain a solution $h(\Hall)$ for the first set
\[
S_1=\{101,123,212,232,303,321\};
\] 
solutions for the other three sets are obtained by suitable permutations
applied to the following morphism.
 
Let $g\colon \Al_3^* \to \Al_4^*$ be defined by
\[
g(0)=103230\,, \
g(1)=1030\,, \
g(2)=12\,,
\]	
where the images have lengths $6,4,2$, respectively.
The morphism $g$ is \emph{not} square-free since the image $g(010)$ contains a square.
However, a computer check can verify that $g$ preserves square-freeness of the factors of $\Hall$ of length~7.
Therefore, by Corollary~\ref{cor:Currie}, the word $g(\Hall)$ is square-free.
Clearly, the images $g(w)$ are reduced for all $w \in \Al_3^*$.
By inspection, the words of $S_1$ are avoided by all words $g(ab)$ with $a \ne b$. Since $\ex_3(g(\Hall)) \le 6$ by Corollary~\ref{inf:6}, 
it follows that $\Ex_3(g(\Hall))=S_1$.
\end{proof}

By the next theorem and the remark that follows, $g$ is, in effect, the only morphism 
$\Al_3^* \to \Al_4^*$ that produces Dean words avoiding the words of $S_1$.

\begin{theorem}\label{unique}
Let $w$ be an infinite Dean word such that $S_1 \subseteq \Ex_3(w)$.
Then $w = u g(v)$, where $|u|\le5$, and $v$ is a square-free ternary word.
\end{theorem}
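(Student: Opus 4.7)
My plan is to recover the decomposition $w = u \cdot g(v)$ by using the positions of the letter $1$ in $w$ as synchronization markers. First, since $w$ avoids the six words of $S_1$, the allowed reduced length-$3$ factors of $w$ are restricted to the ten words $\{010, 012, 030, 032, 103, 121, 210, 230, 301, 323\}$. I would lay these out as edges of the de Bruijn graph on the eight reduced pairs and record the key observation: every pair has a unique length-$3$ extension in this list except for $01$ (extending to $010$ or $012$) and $03$ (extending to $030$ or $032$). In particular the only length-$3$ factors starting with $1$ are $103$ and $121$, and the only ones ending with $1$ are $301$ and $121$.

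Next, I would perform a forward trace in this graph from any position $i$ with $w(i) = 1$. The pair $w(i)w(i+1)w(i+2)$ is forced into $\{103, 121\}$; in the $121$ case the next $1$ sits at position $i+2$, while in the $103$ case the branching at state $03$ leads either through $030 \to 301$ (next $1$ at position $i+4$) or through $032 \to 323 \to 230 \to 301$ (next $1$ at position $i+6$). Thus the letters from position $i$ up to but not including the next $1$ form exactly one of the blocks $12 = g(2)$, $1030 = g(1)$, $103230 = g(0)$. Consequently, the positions of the $1$'s split $w$, after any $1$-free prefix $u$, into consecutive blocks each equal to $g(0)$, $g(1)$, or $g(2)$, and reading off the ternary preimages defines a word $v$ with $w = u \cdot g(v)$.

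To bound $|u|$, I would run the same graph backwards from the first $1$ of $w$. Since $101 \in S_1$ and $121$ already contains a $1$, the prefix $u$ (which has no $1$) must end in $30$; then the only allowed backward extensions of $30$ are $030$ and $230$. The first option would force the letter preceding $0$ to be $1$ (from the unique factor $103$), contradicting the absence of $1$ in $u$, while the second option forces the chain backwards through $323, 032$, and again into $103$ where a $1$ is compelled. This kills the chain within five steps, so the long suffix of $u$ is a suffix of $03230$. Adding the short cases $|u| \le 1$, one gets $|u| \le 5$ with $u \in \{\varepsilon, 0, 2, 30, 030, 230, 3230, 03230\}$.

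Finally, $v$ is square-free: a nonempty square $ss$ in $v$ would yield the nonempty square $g(s)g(s)$ as a factor of $g(v)$, hence of $w$, contradicting the hypothesis that $w$ is a Dean word. The main obstacle, I expect, is the forward trace: one must verify that the two binary choices at states $01$ and $03$, combined with the uniqueness of all other extensions, really do force the three block lengths $2, 4, 6$ and exclude any other return to a $1$ in between. Everything else is a direct bookkeeping consequence of the de Bruijn graph structure.
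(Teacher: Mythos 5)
Your proposal is correct and follows essentially the same route as the paper's proof: both synchronize on the occurrences of the letter $1$, show that the block between consecutive $1$'s must be $g(0)$, $g(1)$, or $g(2)$, bound the $1$-free prefix by $5$, and pull back squares through $g$. You merely make explicit the de Bruijn-graph case analysis that the paper compresses into ``by analysing the words without factors in $S_1$.''
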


\begin{proof}
Let $w=x1z$, where $x$ has no occurrences of $1$. (The word $x$ can be empty.)
By analysing the words without factors in $S_1$, we conclude that
$1$ is preceded in $w$ only by a suffix of $g(0)=103230$, $g(1)=1030$, or $g(2)=12$.
%see Figure~\ref{Pic:3}.
This gives the bound $|x|\le|g(0)|-1=5$ for the prefix~$x$.
We conclude that the suffix $1z$ has a decomposition in terms of $g(a)$, $a \in \Al_3$.
This means that $w=ug(v)$, where $v$ is square-free since $w$ is.
\end{proof}

\begin{remark}
The set $S_1$ is closed under the permutation $(1\, 3)$, and thus the morphism
$g'$, where $1$ and $3$ are interchanged, also satisfies Theorem~\ref{unique}.
Also, besides permuting with $(1\, 3)$, we can conjugate the images with respect to a common prefix: 
if $g(a)=uv_a$, for all $a \in \Al_3$ then also $g''$ with $g''(a)=v_au$
satisfies Theorem~\ref{unique} with only slight changes.
We notice that the morphism $g^R$ where the images $g(a)$ are reversed in order, is obtained
from $g$ by conjugation and the permutation $(1\, 3)$. 
\end{remark}
%%%%%%%%%%%%%%%%%%%%%%%%%%%%%%%%%%%%%%%%%%%%%%%%%%%%%%%%%%%%%%%

\section{Critical exponent VS number of factors of length 3}
Theorem~\ref{Actual3} exhibits a Dean word with critical exponent $2$
that avoids 6 factors of length 3.
In this section, we complement this result by considering the trade-off between
the critical exponent of an infinite Dean word and the number of avoided factors of length 3.
Negative results are obtained by standard backtracking. Positive results
are proved by uniform morphisms that generate infinite Dean words with suitable properties
using the method described in~\cite{Ochem:2004}.

\begin{lemma}
Let $w$ be a $\frac53$-free Dean word, then $|w|<62$.
\end{lemma}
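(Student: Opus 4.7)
The plan is to verify the bound by an exhaustive backtracking search over the (finite) set of $\frac{5}{3}$-free Dean words, in the same spirit as the other finite enumerations in the paper (compare Lemmas~\ref{max58} and~\ref{norep}). Starting from the empty word, we extend the current prefix letter by letter, pruning whenever the extension either (i) creates one of the forbidden pairs $02$, $20$, $13$, $31$, thereby violating reducedness, or (ii) produces a suffix of exponent at least $\frac{5}{3}$, i.e., a suffix of the form $xy$ with $y$ a nonempty prefix of $x$ and $3|y|\ge 2|x|$ (which in particular forbids squares). Since a reduced word allows each letter exactly two admissible successors (for instance, $0$ must be followed by $1$ or $3$), the search tree has branching factor at most~$2$, which keeps the enumeration small.

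By the eight-element symmetry group of permutations of $\Al_4$ preserving reducedness (listed in the Appendix), we may fix the initial segment of $w$ up to this symmetry — say the first two letters $01$ — which cuts the search by a constant factor. One then checks that every surviving branch dies before reaching depth~$62$, yielding $|w|<62$. To confirm that this threshold is correct, the search additionally outputs an explicit $\frac{5}{3}$-free Dean word of length~$61$, showing the bound is sharp.

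The only real difficulty is computational rather than structural: the statement reduces to a finite case analysis whose correctness depends solely on the completeness of the enumeration. The combined pruning from reducedness and $\frac{5}{3}$-freeness — a very restrictive constraint on an effectively binary branching structure, bordering the Dejean threshold for this setting — collapses the tree extremely fast, so the search terminates essentially instantly. No combinatorial insight beyond the systematic enumeration seems required, which is presumably why the authors phrase the lemma as a standard backtracking result.
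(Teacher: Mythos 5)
Your proposal matches the paper's approach: the paper gives no written proof for this lemma beyond the blanket statement that "negative results are obtained by standard backtracking," which is exactly the exhaustive prefix-extension search with pruning by reducedness and $\frac53$-freeness that you describe. Your additional remarks on symmetry reduction and sharpness at length~61 are harmless refinements of the same computation.
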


\begin{lemma}
There exists an infinite $\frac53^+$-free Dean word avoiding 323.
\end{lemma}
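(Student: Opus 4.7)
The plan is to construct the desired word as the image $h(\Hall)$ of the Hall-Thue word under a suitable uniform morphism $h\colon \Al_3^* \to \Al_4^*$, following the method described in~\cite{Ochem:2004}. I would search by computer over uniform morphisms of increasing length $q$ for a candidate $h$ satisfying three conditions: (i) for every pair $ab$ of distinct letters of $\Al_3$, the word $h(ab)$ is reduced, so that $h(\Hall)$ is a reduced word of $\Al_4^*$; (ii) no image $h(ab)$ contains $323$ as a factor, so that $h(\Hall)$ avoids $323$; and (iii) $h$ preserves $\tfrac{5}{3}^+$-freeness on the factors of $\Hall$.

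Conditions (i) and (ii) are finite checks on the eight nontrivial images $h(ab)$. Condition (iii) is handled by the standard uniform-morphism lemma of~\cite{Ochem:2004}: there is an explicit threshold $N=N(q)$ such that, provided $h$ is synchronizing on $\Hall$ (every occurrence of a block $h(a)$ inside $h(\Hall)$ lies at a block boundary) and provided $h(u)$ is $\tfrac{5}{3}^+$-free for every factor $u$ of $\Hall$ of length at most $N$, the word $h(\Hall)$ is itself $\tfrac{5}{3}^+$-free. Synchronization rules out repetitions of exponent $>\tfrac{5}{3}$ straddling block boundaries, while the finitely many short-factor checks cover the local case. Both tests are mechanical, and the short factors of $\Hall$ needed for the local verification are available from the Appendix.

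The genuine obstacle I expect is finding the morphism itself. Since $\tfrac{5}{3}$ is close to the repetition threshold over $\Al_4$, and we additionally forbid $323$ together with the four non-reduced pairs $02,20,13,31$, ``good'' uniform morphisms are sparse and one may need to scan several values of $q$ before a workable example appears. Once a suitable $h$ is produced, the remainder of the argument is a routine finite verification, entirely in the spirit of the proof of Theorem~\ref{Actual3} but with $\tfrac{5}{3}^+$-freeness replacing plain square-freeness.
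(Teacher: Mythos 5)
Your general framework---a uniform morphism verified by the synchronization lemma of~\cite{Ochem:2004} plus finite checks for reducedness and for the absence of $323$---is exactly the method the paper uses. But there is a fatal flaw in your choice of pre-image: you cannot take the ternary word $\Hall$ (or any infinite ternary square-free word) as the source. By Dejean's theorem the repetition threshold for three letters is $\tfrac{7}{4}$, so every infinite ternary word, $\Hall$ included, contains a factor of exponent at least $\tfrac{7}{4}$. A $q$-uniform morphism sends a factor of period $p$ and exponent $e$ to a factor of period $qp$ and the \emph{same} exponent $e$ (the image has length $q$ times the original and period $q$ times the original). Hence $h(\Hall)$ necessarily contains a repetition of exponent at least $\tfrac{7}{4}>\tfrac{5}{3}$ and can never be $\tfrac{5}{3}^+$-free, no matter how cleverly $h$ is chosen. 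No amount of computer search over uniform ternary-to-quaternary morphisms will find a witness, because condition (iii) is unsatisfiable.

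The paper avoids this by taking as pre-image an arbitrary $\tfrac{7}{5}^+$-free word over the \emph{four}-letter alphabet (such words exist since the repetition threshold for four letters is $\tfrac{7}{5}$), and applying a $136$-uniform morphism $\Al_4^*\to\Al_4^*$. Since $\tfrac{7}{5}<\tfrac{5}{3}$, the aligned repetitions inherited from the pre-image are harmless, and only the repetitions that are not aligned with block boundaries need to be excluded by the finite verification. If you replace $\Hall$ by a $\tfrac{7}{5}^+$-free $4$-ary word (and correspondingly search over morphisms defined on $\Al_4$ whose images of short $\tfrac{7}{5}^+$-free factors pass the test), the rest of your argument goes through as you describe.
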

The image of any $\frac{7}{5}^+$-free 4-ary word by the following 136-uniform morphism is a $\frac53^+$-free Dean word avoiding 323.

$$\begin{array}{ll}
 \texttt{0}\to&\texttt{0123032123012103012321012303210301210321230103}\\
              &\texttt{2101232103012303212301032101230321030123210123}\\
              &\texttt{01032123032101232103012103212301032101232103}\\
 \texttt{1}\to&\texttt{0123032123012103012321012303210301210321230103}\\
              &\texttt{2101230321030123210123010321230321012321030123}\\
              &\texttt{03212301032101230321030121032123032101232103}\\
 \texttt{2}\to&\texttt{0123032123012103012321012301032123012103012303}\\
              &\texttt{2123010321012321030121032123010321012303210301}\\
              &\texttt{23210123010321230321012321030121032123010321}\\
 \texttt{3}\to&\texttt{0123032123012103012321012301032123012103012303}\\
              &\texttt{2123010321012303210301232101230103212303210123}\\
              &\texttt{21030123032123010321012321030121032123010321}
\end{array}$$

\begin{lemma}
Let $w$ be a $\frac{17}{10}$-free Dean word and $d_3(w)\ge2$, then $|w|<289$.
\end{lemma}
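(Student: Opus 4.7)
The plan is to mirror the methodology announced at the start of the section: the statement is a negative result, and it is established by an exhaustive backtracking search. Concretely, I would enumerate all $\frac{17}{10}$-free Dean words depth-first and verify that no branch satisfying $\ex_3 \ge 2$ survives past length $288$.

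Starting from the empty word, at a node labelled by $w$ I would generate the (at most two) admissible extensions $wa$ with $a \in \Al_4$ keeping the word reduced, i.e.\ avoiding the forbidden pairs $02, 20, 13, 31$. A candidate is rejected as soon as $wa$ contains a factor of exponent at least $\frac{17}{10}$; since any newly created high-exponent factor must end at the last letter, this is a purely local check on suffixes of bounded period (one only needs periods up to roughly $|w|/2$, and in practice the relevant periods are short).

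To exploit the hypothesis $\ex_3(w) \ge 2$, I would maintain during the traversal the subset of the $16$ reduced words of length~$3$ (each of the eight reduced pairs admits two valid right-extensions) that have already occurred as factors of $w$. This set is monotone nondecreasing along any branch, so as soon as $15$ of the $16$ entries are present the branch may be pruned: every descendant then has $\ex_3 \le 1$ and is of no interest. One may additionally quotient the search by the eight-element group of letter permutations preserving reduced words (cf.\ the Appendix), cutting the effort by a constant factor.

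The main obstacle is purely computational rather than conceptual. The raw search tree has, a priori, size exponential in its depth, and the feasibility of the procedure rests entirely on the two pruning rules above (the power-freeness cut and the $\ex_3$ cut) being, in combination, strong enough to keep the traversal tractable up to depth $289$. Once run to completion, the backtracker certifies that every explored branch terminates by depth $289$, which is precisely the bound asserted by the lemma.
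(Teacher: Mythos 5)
Your proposal matches the paper's own justification, which is simply the remark that ``negative results are obtained by standard backtracking'': an exhaustive depth-first enumeration of reduced square-free extensions with a local $\frac{17}{10}$-power check on suffixes and pruning once fewer than two of the sixteen reduced length-$3$ words remain absent. The argument is correct and essentially identical to the paper's.
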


\begin{lemma}
There exists an infinite $\frac{17}{10}^+$-free Dean word avoiding 030 and 232.
\end{lemma}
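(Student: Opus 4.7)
My plan is to follow the template established by the preceding lemma: exhibit a uniform morphism $\phi\colon \Al_k^* \to \Al_4^*$ and apply it to an infinite $\beta^+$-free word over $\Al_k$, whose existence (for appropriate $(k,\beta)$) is guaranteed by Dejean's theorem. The image $\phi(w)$ will then be the required infinite $\tfrac{17}{10}^+$-free Dean word avoiding $030$ and $232$. Paralleling the preceding lemma, a natural first choice is $k=4$ and $\beta=\tfrac{7}{5}$ (Dejean's threshold for four letters); if that fails one may try $k=3$ and $\beta=\tfrac{7}{4}$, or other combinations.

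Given the source parameters fixed, I would computer-search over $\ell$-uniform morphisms $\phi\colon \Al_4^* \to \Al_4^*$, enlarging $\ell$ until a candidate meeting the following three conditions is found:
\begin{enumerate}[(i)]
\item for every pair $ab \in \Al_4^2$ occurring as a factor of some $\tfrac{7}{5}^+$-free $4$-ary word, $\phi(ab)$ is reduced;
\item none of the images $\phi(ab)$ contains $030$ or $232$;
\item $\phi$ sends $\tfrac{7}{5}^+$-free $4$-ary words to $\tfrac{17}{10}^+$-free words.
\end{enumerate}
Conditions~(i) and~(ii) are verified by inspecting the finite list of pair images. For~(iii) I would invoke the standard method of~\cite{Ochem:2004}: for a uniform morphism, preservation of $\alpha^+$-freeness from $\beta^+$-free inputs reduces to checking $\alpha^+$-freeness of $\phi(u)$ for every $\beta^+$-free factor $u$ of length bounded by an explicit function of $\ell$, $\alpha$ and $\beta$. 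This turns~(iii) into a finite mechanical check.

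The hard part will be producing a candidate $\phi$ in the first place. Simultaneously forbidding $030$ and $232$ removes substantial combinatorial slack, and the gap $\tfrac{17}{10}-\tfrac{7}{5}=\tfrac{3}{10}$ between target and source exponents is tight, so only rather long uniform morphisms are likely to clear condition~(iii). I therefore expect $\phi$ to be longer than the $136$-uniform morphism of the previous lemma, and the search will probably need heuristics such as conjugating images, permuting letters of $\Al_4$, or restricting to morphisms whose images share a common prefix or suffix. Once such a $\phi$ is exhibited, applying it to any explicit infinite $\tfrac{7}{5}^+$-free word over $\Al_4$ produces the Dean word required by the statement.
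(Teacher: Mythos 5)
Your strategy coincides exactly with the paper's: the authors apply a $358$-uniform morphism $\Al_4^*\to\Al_4^*$ to an arbitrary $\frac{7}{5}^+$-free $4$-ary word, so your choice of source parameters $(k,\beta)=(4,\tfrac75)$ is the right one, and your prediction that the morphism would need to be substantially longer than the $136$-uniform one of the previous lemma is borne out ($358$ versus $136$). The verification scheme you describe --- reducedness and absence of $030$, $232$ on pair images, plus the finite check from~\cite{Ochem:2004} for transferring $\frac{7}{5}^+$-freeness to $\frac{17}{10}^+$-freeness --- is precisely what the paper relies on.

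That said, as written your proposal is a search plan rather than a proof: the entire mathematical content of this lemma is the explicit morphism itself, and you have not produced one. Until a concrete candidate is exhibited and the finite checks are actually run, the existence claim is not established --- there is no a priori guarantee that a uniform morphism satisfying (i)--(iii) exists at any length, so the lemma cannot be considered proved by describing the search procedure alone. You would also need to state the explicit factor-length bound from~\cite{Ochem:2004} that makes condition~(iii) a finite check, since that bound is what turns your step~(iii) from an assertion into a verification. With the paper's $358$-uniform morphism in hand, your argument closes; without it, the proof is incomplete.
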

The image of any $\frac{7}{5}^+$-free 4-ary word by the following 358-uniform morphism is a $\frac{17}{10}^+$-free Dean word avoiding 030 and 232.

%Here is a 358-uniform morphism such that the image of any $\frac{7}{5}^+$-free 4-ary word is a $\frac{17}{10}^+$-free Dean word avoiding 030 and 232.

$$\begin{array}{ll}
 \texttt{0}\to&\texttt{0103230121032123010321012303212301210323010321230}\\
              &\texttt{3210123010321230121032301032101230321230121032123}\\
              &\texttt{0321012301032301210321230103210123032123010323012}\\
              &\texttt{1032123032101230103212301210323010321230321012301}\\
              &\texttt{0323012103212301032101230321230121032123032101230}\\
              &\texttt{1032123012103230103210123010323012103212301032101}\\
              &\texttt{2303212301032301210321230321012301032123012103230}\\
              &\texttt{103210123032123}\\
 \texttt{1}\to&\texttt{0103230121032123010321012303212301210323010321230}\\
              &\texttt{3210123010321230121032301032101230321230121032123}\\
              &\texttt{0321012301032301210321230103210123032123010323012}\\
              &\texttt{1032123032101230103212301210323010321012301032301}\\
              &\texttt{2103212301032101230321230121032123032101230103212}\\
              &\texttt{3012103230103212303210123010323012103212301032101}\\
              &\texttt{2303212301032301210321230321012301032123012103230}\\
              &\texttt{103210123032123}\\
 \texttt{2}\to&\texttt{0103230121032123010321012303212301210323010321230}\\
              &\texttt{3210123010321230121032301032101230103230121032123}\\
              &\texttt{0103210123032123010323012103212303210123010321230}\\
              &\texttt{1210323010321230321012301032301210321230103210123}\\
              &\texttt{0321230121032301032101230103230121032123032101230}\\
              &\texttt{1032123012103230103210123032123010323012103212301}\\
              &\texttt{0321012303212301210321230321012301032123012103230}\\
              &\texttt{103212303210123}\\
 \texttt{3}\to&\texttt{0103230121032123010321012303212301210323010321230}\\
              &\texttt{3210123010321230121032301032101230103230121032123}\\
              &\texttt{0103210123032123010323012103212303210123010321230}\\
              &\texttt{1210323010321230321012301032301210321230103210123}\\
              &\texttt{0321230121032123032101230103212301210323010321012}\\
              &\texttt{3032123010323012103212301032101230321230121032301}\\
              &\texttt{0321012301032301210321230321012301032123012103230}\\
              &\texttt{103212303210123}\\
\end{array}$$

%GB: There also seems to exist an infinite $\frac{17}{10}^+$-free Dean word avoiding 232 and 323.

\begin{lemma}
Let $w$ be a $\frac74$-free Dean word and $d_3(w)\ge3$, then $|w|<68$.
\end{lemma}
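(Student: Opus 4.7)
The plan is to establish this bound by a standard depth-first backtracking search, as announced at the beginning of the section and applied in the earlier negative lemmas. I would enumerate all $\frac74$-free Dean words satisfying $\ex_3(w)\ge 3$ by growing a candidate prefix one letter at a time and aggressively pruning the tree, stopping as soon as no live branch can still reach depth~$68$.

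At each node, represented by a prefix $p$ already known to be a $\frac74$-free Dean word, I try the (at most two) letters $a\in\Al_4$ for which $pa$ remains reduced (no new suffix in $\{02,20,13,31\}$) and square-free (no square ending at the new rightmost position), and additionally check that $pa$ is $\frac74$-free, i.e., no fractional power of exponent at least $\frac74$ ends at the last letter. Each of these three conditions is local to a bounded suffix and admits an incremental constant-time check. To prune branches that can no longer satisfy $\ex_3(w)\ge 3$, I maintain the set $F_3(p)\subseteq\Al_4^3$ of reduced length-$3$ factors already occurring in~$p$; since $F_3$ is monotone under extension and there are only $16$ reduced square-free words of length~$3$, the inequality $\ex_3(p)\le 16-|F_3(p)|$ lets me discard any prefix with $|F_3(p)|>13$. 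Symmetry under the eight permutations of $\Al_4$ that preserve reducedness (see the Appendix), together with reversal, can be used to fix the opening letters of~$w$ and cut the total work by roughly a factor of~$16$.

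The main obstacle is computational rather than mathematical. Because $\frac74$ is looser than the $\frac53$ and $\frac{17}{10}$ thresholds used earlier in this section, the remaining tree is noticeably larger, so an efficient incremental verification of the fractional-power condition (following the method of~\cite{Ochem:2004}) is essential to keep the search tractable. With these ingredients in place, the search runs to completion and confirms that every live branch dies strictly before depth~$68$, which yields the stated bound $|w|<68$.
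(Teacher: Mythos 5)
Your proposal is correct and matches the paper's approach: the paper gives no explicit proof for this lemma, stating only that the negative results in this section are obtained by standard backtracking, which is exactly the exhaustive prefix-extension search you describe. Your pruning rule (discard prefixes once more than $13$ of the $16$ reduced length-$3$ words occur) is sound because the set of occurring factors is monotone under extension, and for words of length $\ge 59$ every absent reduced length-$3$ word is automatically a minimal absent word by Lemma~\ref{max58}, so the condition $d_3(w)\ge 3$ coincides with having at least three absent reduced factors of length~$3$.
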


\begin{lemma}
There exists an infinite $\frac{7}{4}^+$-free Dean word avoiding 232, 212, 303, and 323.
\end{lemma}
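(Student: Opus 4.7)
The plan is to follow the same template used for the preceding two existence lemmas in this section: we exhibit an explicit uniform morphism $\mu\colon \Al_4^* \to \Al_4^*$ of some length $n$, and argue that for every infinite $\tfrac{7}{5}^+$-free word $w \in \Al_4^\omega$ (which exists by Dejean's theorem), the image $\mu(w)$ is an infinite Dean word that is $\tfrac{7}{4}^+$-free and avoids the four factors $212$, $232$, $303$, $323$. The existence of a suitable source word is free, so the entire content of the proof is the existence of a good~$\mu$.

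First I would search by backtracking for a candidate uniform morphism $\mu$ of modest length (starting, say, at $n\approx 100$ and increasing if no candidate is found). The requirements on $\mu$ are structural and local: (i) each $\mu(a)$ is a reduced word, (ii) for any reduced pair $ab \in \Al_4^2$, the concatenation $\mu(a)\mu(b)$ is reduced, so that $\mu$ maps reduced words to reduced words, and (iii) none of $212$, $232$, $303$, $323$ occurs in $\mu(abc)$ for any reduced triple $abc$. Conditions (i)--(iii) can be verified mechanically on the finite list of $\mu(a)$, $\mu(ab)$ and $\mu(abc)$, and they immediately give the reducedness and the four factor-avoidance conclusions.

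The substantial condition is that $\mu(w)$ be $\tfrac{7}{4}^+$-free. I would establish this with the standard lifting technique of Ochem~\cite{Ochem:2004}, already invoked in the previous lemmas of this section: for a uniform morphism of length $n$, it suffices to check that no factor of $\mu(w)$ of length at most some bound $N(n,\tfrac{7}{4})$ has exponent strictly greater than $\tfrac{7}{4}$, which in turn reduces to verifying the property on the images $\mu(u)$ for all $\tfrac{7}{5}^+$-free factors $u$ of $w$ up to some explicit finite length. Since this list of $u$ is finite, the entire verification is a terminating computer check.

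The main obstacle, as in the two preceding lemmas, is engineering: finding a morphism whose length is small enough to fit in the paper while simultaneously satisfying the rather tight repetition bound $\tfrac{7}{4}^+$ together with the avoidance of four length-$3$ factors (which forces much of the structure of $\mu(a)$ already). I expect a morphism of length a few hundred, presented as a displayed block of four images analogous to the $358$-uniform morphism in the preceding lemma, with the proof consisting of the statement ``the image of any $\tfrac{7}{5}^+$-free 4-ary word by the following $n$-uniform morphism is a $\tfrac{7}{4}^+$-free Dean word avoiding $212$, $232$, $303$, $323$,'' the explicit morphism, and a remark that correctness has been verified by the method of~\cite{Ochem:2004}.
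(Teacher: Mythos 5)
Your proposal matches the paper's proof exactly: the paper exhibits an explicit uniform morphism (in fact only $46$-uniform, smaller than your estimate) applied to any $\tfrac{7}{5}^+$-free $4$-ary word, with correctness verified by the method of~\cite{Ochem:2004} plus the local checks for reducedness and avoidance of the four length-$3$ factors. The approach and all verification steps are the same as in the paper.
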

The image of any $\frac{7}{5}^+$-free 4-ary word by the following 46-uniform  is a $\frac74^+$-free Dean word avoiding 232, 212, 303, and 323.

%Here is a 46-uniform morphism such that the image of any $\frac75^+$-free 4-ary word is a $\frac74^+$-free Dean word avoiding 232, 212, 303, and 323.

$$\begin{array}{l}
 \texttt{0}\to\texttt{0103210123012103010321030121032101230121030123}\\
 \texttt{1}\to\texttt{0103012101230121032101230103012103210121030123}\\
 \texttt{2}\to\texttt{0103012101230121030103210123012103210121030123}\\
 \texttt{3}\to\texttt{0103012101230103210301210321012103010321030123}
\end{array}$$

\begin{lemma}
Let $w$ be a $\frac{15}8$-free Dean word and $d_3(w)\ge5$, then $|w|<136$.
\end{lemma}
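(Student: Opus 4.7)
The plan is to establish the bound by an exhaustive backtracking search, in the same spirit as the other negative results in this section. I would perform a depth-first enumeration of all reduced words $u \in \Al_4^*$ of length at most $135$ that satisfy, letter by letter, the constraints imposed by the hypothesis: $u$ is reduced (no factor in $\{02,20,13,31\}$); $u$ contains no factor of exponent at least $\frac{15}{8}$ (in particular no square); and the number of distinct reduced length-$3$ words occurring as factors of $u$ does not exceed $16-5=11$, so that the target condition $\ex_3(w)\ge 5$ remains achievable by some continuation.

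Each of these conditions can be maintained incrementally when one new letter is appended: reducedness is a local check on the last two letters; the absence of a $\frac{15}{8}$-power only needs to be verified against suffixes of the extended word; and the set $F_3(u)$ of reduced triples seen so far grows by at most one element per step. Any branch violating a condition is pruned. By Lemma~\ref{max58}, once $|u|\ge 59$ every reduced pair already occurs in $u$, so for such $u$ the minimal absent length-$3$ words are exactly the reduced length-$3$ words absent from $u$, which makes the bookkeeping in the third condition straightforward; shorter prefixes are treated without this simplification. Symmetry under the eight permutations of $\Al_4$ preserving reduced words (see the Appendix) can be exploited to restrict the choice of the first few letters and reduce the running time by a small constant factor. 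The lemma is then equivalent to the assertion that no branch of this search reaches length $136$.

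The main obstacle is purely computational: one has to verify that the surviving search tree is small enough to enumerate within reasonable time. The joint strength of the $\frac{15}{8}$-freeness constraint and the cap $|F_3(u)|\le 11$ on distinct reduced triples should keep the effective branching factor well below~$4$, and the analogous but smaller bounds proved in the preceding lemmas of this section (for instance $|w|<68$ under $\frac{7}{4}$-freeness with $d_3(w)\ge 3$, and $|w|<289$ under $\frac{17}{10}$-freeness with $d_3(w)\ge 2$) strongly suggest that such a backtracking search is entirely tractable here. The constant $136$ itself has no combinatorial significance beyond being the length at which the pruned tree becomes empty.
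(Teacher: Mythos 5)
Your proposal matches the paper's approach: the paper proves all negative results in this section, including this one, by exactly the kind of standard backtracking search you describe, with $\frac{15}{8}$-freeness forcing square-freeness and the count of distinct reduced length-$3$ factors capped at $16-5=11$ as the pruning criterion. Your additional remarks (incremental suffix checks, the reduction via Lemma~\ref{max58} from minimal absent words to plain absent words once all reduced pairs occur, and symmetry reduction) are all sound and consistent with what the authors leave implicit.
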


\begin{lemma}
There exists an infinite $\frac{15}8^+$-free Dean word avoiding 010, 032, 212, 303, and 323.
\end{lemma}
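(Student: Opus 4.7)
The plan is to mimic the strategy used for the previous three existence lemmas: exhibit an explicit uniform morphism $h \colon \Al_4^* \to \Al_4^*$ such that, for every $\tfrac{7}{5}^+$-free word $w \in \Al_4^\omega$, the image $h(w)$ is a Dean word that is $\tfrac{15}{8}^+$-free and avoids each of $010$, $032$, $212$, $303$, $323$. The existence of an infinite $\tfrac{7}{5}^+$-free word on four letters is a classical result, so producing such an $h$ will finish the proof.

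The search itself is carried out by backtracking over uniform morphisms of a chosen length $q$ (the previous lemmas used $q \in \{46, 136, 358\}$, so something on the order of a few hundred is a reasonable first guess here). Given a candidate $h$, one verifies four things: (i) every image $h(a)$ is a reduced word and every concatenation $h(a)h(b)$ with $a \ne b$ is reduced, so that $h(w)$ is always reduced; (ii) no image $h(a)$ contains $010$, $032$, $212$, $303$, or $323$ as a factor, and neither does any length-$2q$ window straddling an image boundary, i.e.\ none of the factors of $h(ab)$ for $a \ne b$ (this is what ensures the avoidance properties survive concatenation); (iii) $h$ is \emph{synchronising} on the set of $\tfrac{7}{5}^+$-free inputs in the sense of~\cite{Ochem:2004}, so that occurrences of a given factor in $h(w)$ admit a bounded number of pre-images; (iv) via the criterion of~\cite{Ochem:2004}, $h$ preserves $\tfrac{15}{8}^+$-freeness, which reduces to checking that no image $h(u)$ of a short $\tfrac{7}{5}^+$-free factor $u$ contains a repetition of exponent $> \tfrac{15}{8}$. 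The bound on $|u|$ coming from the Ochem method depends on $q$ and on the target exponent, but it is explicitly computable; for the parameters here it will be a small constant, so (iv) is a finite check.

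The main obstacle is purely the search in step (i): the exponent $\tfrac{15}{8}$ is close to $2$ and the avoidance list of five forbidden length-$3$ factors is large, so many candidate morphisms will fail either the avoidance test or the Ochem repetition test. In practice one proceeds by gradually increasing $q$ until the backtracking returns a morphism; the $q=136$ benchmark from the $\tfrac{17}{10}^+$-free lemma and Lemma~\ref{lemm:24} (which shows that every infinite Dean word has $\ex_3 \le 6$, so avoiding five length-$3$ factors is only one short of impossible) suggest that the answer is feasible but not tiny. Once $h$ is found, steps (ii)--(iv) are entirely mechanical finite verifications, and the proof concludes exactly as in the earlier lemmas: take any $\tfrac{7}{5}^+$-free $w \in \Al_4^\omega$ and output $h(w)$.
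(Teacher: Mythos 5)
Your plan is the same as the paper's: exhibit an explicit uniform morphism whose images of an infinite Dejean-type word are $\frac{15}{8}^+$-free Dean words avoiding the five listed factors, with reducedness, factor-avoidance, and exponent preservation all checked by the finite criteria of~\cite{Ochem:2004}. Two remarks. First, the paper's actual witness is a $100$-uniform morphism applied to $\frac{7}{4}^+$-free \emph{ternary} words rather than to $\frac{7}{5}^+$-free $4$-ary words; this is an immaterial variation of the same method (both source languages are infinite by Dejean's theorem), but it is worth knowing that shrinking the domain alphabet is sometimes what makes the search succeed. Second, and more importantly, as written your argument is a search plan, not a proof: the statement is an existence claim whose entire proof content is the explicit morphism together with the finite verification, and until a concrete morphism is produced and checked, nothing has been established --- the heuristics about feasible values of $q$ do not substitute for the witness. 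The verification checklist you describe (reducedness across concatenations, absence of the forbidden length-$3$ factors in all $h(ab)$, synchronisation, and the bounded pre-image check for exponent $\frac{15}{8}^+$) is the correct one and matches what the paper implicitly relies on.
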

The image of any $\frac{7}{4}^+$-free ternary word by the following 100-uniform morphism is a $\frac{15}8^+$-free Dean word avoiding 010, 032, 212, 303, and 323.

%Here is a 100-uniform morphism such that the image of any $\frac{7}{4}^+$-free ternary word is a $\frac{15}8^+$-free Dean word avoiding 010, 032, 212, 303, and 323.

$$\begin{array}{ll}
 \texttt{0}\to&\texttt{01232101230121030123210121030121012321030123210123}\\
              &\texttt{01210123210301210123012103012321030121012321012103}\\
 \texttt{1}\to&\texttt{01210123012103012321012301210123210121030121012321}\\
              &\texttt{03012321012103012101230121030123210123012101232103}\\
 \texttt{2}\to&\texttt{01210123012103012321012103012101232103012321012301}\\
              &\texttt{21030123210301210123210121030123210123012101232103}
\end{array}$$
%%%%%%%%%%%%%%%%%%%%%%%%%%%%%%%%%%%%%%%%%%%%%%%%%%%%%%%%%
\section{Critical exponent VS directedness}
A word $u$ is \emph{$d$-directed} if for every factor $f$ of $u$ of length $d$, the reverse of $f$, denoted by $f^R$, is not a factor of~$u$.
In this section, we consider the trade-off between the critical exponent of an infinite
Dean word and the smallest $d$ such that it is $d$-directed.
We use the same techniques as in the previous section for positive and negative results.
In order to verify that a word is $d$-directed, we only have to check for occurrences of factors of length $d$ and their reverse.

\begin{lemma}
There exists an infinite $\frac53^+$-free 12-directed Dean word.
\end{lemma}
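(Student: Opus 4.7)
The plan is to follow the same construction pattern used for the other positive results in this and the previous section: exhibit a uniform morphism $\sigma\colon \Al_4^* \to \Al_4^*$ such that the image of every $\tfrac75^+$-free word over $\Al_4$ is a $\tfrac53^+$-free $12$-directed Dean word. Since infinite $\tfrac75^+$-free words over $\Al_4$ exist, this immediately yields an infinite Dean word with the required properties.

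First, I would search computationally for a uniform morphism $\sigma$ of modest length, chosen so that three conditions hold simultaneously: (i) the images $\sigma(ab)$ for all reduced pairs $ab$ are reduced, so that $\sigma(w)$ is reduced for every $\tfrac75^+$-free input $w$; (ii) $\sigma$ sends $\tfrac75^+$-free 4-ary words to $\tfrac53^+$-free words; and (iii) no length-$12$ factor of $\sigma(w)$ has its reverse also appearing as a factor of $\sigma(w)$.

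Conditions (i) and (ii) would be verified by the standard technique of \cite{Ochem:2004}: preservation of the target repetition-freeness property by a uniform morphism reduces to checking the images of all $\tfrac75^+$-free factors up to a bound depending linearly on $|\sigma(0)|$, so the check is finite. For condition (iii), uniformity of $\sigma$ implies that every length-$12$ factor of $\sigma(w)$ is contained in the image of some short factor of $w$ of length bounded by $\lceil 12/|\sigma(0)|\rceil+1$; thus the set $F_{12}$ of length-$12$ factors of $\sigma(w)$ is computable by enumerating those short factors over all $\tfrac75^+$-free 4-ary words. One then verifies $F_{12}\cap F_{12}^R=\emptyset$, where $F_{12}^R=\{f^R\,:\,f\in F_{12}\}$, to confirm $12$-directedness.

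The main obstacle is the competition between the three constraints: $12$-directedness is a \emph{symmetry-breaking} condition, whereas square-freeness with low critical exponent tends to force an abundance of short factors, and the reduced-word requirement over $\Al_4$ already halves the usable pool of length-$2$ factors. I expect to need a morphism of uniform length comparable to the $136$- or $46$-uniform constructions earlier in the section; if no sufficiently short uniform $\sigma$ works, a two-stage construction (first produce a $\tfrac53^+$-free Dean word by a morphism of the kind in the earlier lemma, then apply a second morphism tailored to break the length-$12$ symmetries) would be the natural fallback. Once a candidate $\sigma$ is found, all verifications are finite computer checks of the type already carried out throughout the paper.
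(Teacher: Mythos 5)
Your approach is exactly the paper's: the paper proves this lemma by exhibiting an explicit 72-uniform morphism and asserting (via the finite checks of~\cite{Ochem:2004} plus a finite check of length-12 factors against their reverses) that the image of any $\frac{7}{5}^+$-free 4-ary word is a $\frac53^+$-free 12-directed Dean word. The only difference is that you describe the search and verification procedure without exhibiting the witness morphism itself, which is the one concrete ingredient the paper supplies.
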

The image of any $\frac{7}{5}^+$-free 4-ary word by the following 72-uniform morphism is a $\frac53^+$-free Dean word that is 12-directed.

$$\begin{array}{ll}
 \texttt{0}\to&\texttt{010321012303210301210323010321230321}\\
              &\texttt{030123210323012103212303210123210323}\\
 \texttt{1}\to&\texttt{010321012303210301210323010321230321}\\
              &\texttt{012321032301210321230321030123210323}\\
 \texttt{2}\to&\texttt{010321012303210301210323010321012321}\\
              &\texttt{032301210321230321012321030121032123}\\
 \texttt{3}\to&\texttt{010321012303210301210323010321012321}\\
              &\texttt{030121032123032101232103230121032123}
\end{array}$$

\begin{lemma}
Let $w$ be a $\frac{27}{16}$-free 11-directed Dean word, then $|w|<129$.
\end{lemma}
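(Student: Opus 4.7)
The plan is to establish the bound by an exhaustive depth-first backtracking search over reduced square-free words on $\Al_4$, which is the standard technique the paper uses for the earlier length-bound lemmas. Starting from the empty word, at each node $u$ I try to append each of the four letters $a \in \Al_4$ in turn and reject $ua$ as soon as it violates one of the three defining conditions: $(i)$ $ua$ is not reduced, that is its last two letters belong to $\{02,20,13,31\}$; $(ii)$ $ua$ contains a repetition of exponent $\ge \frac{27}{16}$; or $(iii)$ the length-$11$ suffix of $ua$ equals the reverse of some factor of length $11$ already occurring in $ua$. Whenever all four extensions are rejected, the branch closes and the search backtracks. Since the claimed bound is $|w|<129$, it suffices to certify that no branch of this tree reaches depth $129$, and the deepest surviving branch then pins down the exact extremal length.

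For the implementation to terminate quickly, each prefix is equipped with incremental data that is cheap to update on extension and to roll back on backtracking. For condition $(ii)$ it is enough to test repetitions that are suffixes of $ua$, because any new repetition created by appending $a$ must end at the new final position; this is done by scanning periods of the last few dozen symbols. For condition $(iii)$ one maintains a hash set containing the length-$11$ factors of $u$ together with their reverses, and compares the new length-$11$ suffix against it in amortised constant time. With these structures, one extension step costs $O(|u|)$ and the full enumeration completes in a routine amount of computation, outputting the longest admissible word, which has length $128$.

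The main practical obstacle, by contrast with the purely avoidance-based lemmas above (such as the $\frac{17}{10}$-free case), is the non-local character of the $11$-directedness condition. Appending a single letter creates exactly one new length-$11$ factor, but this factor can be forbidden because its reverse appeared arbitrarily far back in $u$, so one cannot prune using only a bounded window. The hash-set bookkeeping above is designed precisely to handle this: it must be updated at each extension, and correctly undone at each backtrack, so that the state carried along a branch truly reflects the set of length-$11$ factors of the current prefix and of its reverse. Once this invariant is maintained, the verification is mechanical, and the search exhibits a certificate word of length $128$ together with the fact that every depth-$129$ continuation is pruned, yielding $|w|<129$.
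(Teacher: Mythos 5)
Your proposal is correct and matches the paper's approach: the paper obtains all such negative results, including this one, by standard computer backtracking over reduced square-free words, checking $d$-directedness via occurrences of length-$d$ factors and their reverses. The implementation details you supply (suffix-repetition tests, hash set of length-11 factors and reverses) are a reasonable realisation of exactly that search.
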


\begin{lemma}
There exists an infinite $\frac{27}{16}^+$-free 10-directed Dean word.
\end{lemma}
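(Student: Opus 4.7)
The plan is to follow the same template used for the positive results throughout this section: exhibit an explicit uniform morphism $h\colon \Sigma_4^* \to \Sigma_4^*$ such that for every $\frac{7}{5}^+$-free word $w \in \Sigma_4^\omega$, the image $h(w)$ is a $\frac{27}{16}^+$-free, $10$-directed Dean word. Since an infinite $\frac{7}{5}^+$-free $4$-ary word is known to exist, producing one such $h$ will establish the lemma.

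First, I would do a computer search over uniform morphisms $h$ on $\Sigma_4$ of small-to-moderate length, restricted to those whose images $h(0),h(1),h(2),h(3)$ are themselves Dean words and whose concatenations $h(ab)$ (for all eight reduced pairs $ab$) do not introduce a factor in $\{02,20,13,31\}$ at the boundary; this guarantees that $h(w)$ is reduced whenever $w$ is reduced, and in particular for every $\frac{7}{5}^+$-free $w$. The uniformity length has to be chosen large enough to leave room for avoiding the reverses of all length-$10$ factors, but not so large that the verification step below becomes intractable; the previous lemma, which rules out $\frac{27}{16}$-free $11$-directed Dean words of length $\ge 129$, suggests the morphism must be at least of order a few dozen and strictly $10$-directed rather than $11$-directed.

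Next, for a candidate morphism $h$ I would verify the two non-trivial properties with the standard technique of Ochem~\cite{Ochem:2004}. For $\frac{27}{16}^+$-freeness, one shows that any $\frac{27}{16}^+$-power in $h(w)$ must descend from a $\frac{7}{5}^+$-power in a bounded-length factor of $w$; since $\frac{7}{5}^+$-freeness of $w$ is given, it is enough to test all $\frac{7}{5}^+$-free factors of $w$ up to a constant length (depending on the uniformity of $h$) and check that their images contain no forbidden repetition. For $10$-directedness, I would compute the finite set $F_{10}$ of factors of length $10$ that appear in $h(w)$, which is determined by the images of short $\frac{7}{5}^+$-free factors of $w$, and check that no reverse $f^R$ with $f\in F_{10}$ lies in $F_{10}$.

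The main obstacle is the search itself: the critical-exponent bound $\frac{27}{16}\approx 1.6875$ is uncomfortably close to the infimum of critical exponents of infinite Dean words, and adding the constraint of $10$-directedness further thins the space of admissible morphisms. I expect that most candidate morphisms fail one of the two tests, so the real work is an efficient enumeration (pruning on forbidden factors and symmetry under the eight permutations of $\Sigma_4$ preserving reducedness) to surface at least one morphism that passes both the repetition test and the directedness test; once such an $h$ is found, its validity is a routine finite verification.
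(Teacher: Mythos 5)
Your plan is exactly the paper's approach: the paper proves this lemma by exhibiting an explicit 564-uniform morphism on $\Sigma_4$ whose image of any $\frac{7}{5}^+$-free 4-ary word is a $\frac{27}{16}^+$-free, 10-directed Dean word, with reducedness, repetition-freeness and directedness checked by the same finite verifications you describe. The only missing ingredient is the explicit morphism itself, which is the computational witness the paper supplies.
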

The image of any $\frac{7}{5}^+$-free 4-ary word by the following 564-uniform morphism is a $\frac{27}{16}^+$-free Dean word that is 10-directed.

{\tiny
$$\begin{array}{ll}
 \texttt{0}\to&\texttt{01032101230321030123210123032123010323012103012321012303210301232103230121032123010321}\\
              &\texttt{01230321230103230121030123210123032123010321012303210301232103230121030123210123032123}\\
              &\texttt{01032301210321230103210123032103012321012303212301032301210301232103230121032123010321}\\
              &\texttt{01230321230103230121030123210123032123010321012303210301232103230121032123010323012103}\\
              &\texttt{01232101230321230103230121032123010321012303210301232103230121030123210123032123010323}\\
              &\texttt{01210301232103230121032123010321012303212301032301210301232101230321030123210323012103}\\
              &\texttt{212301032301210301232101230321230103230121032123}\\
 \texttt{1}\to&\texttt{01032101230321030123210123032123010323012103012321012303210301232103230121032123010321}\\
              &\texttt{01230321230103230121030123210123032123010321012303210301232103230121030123210123032123}\\
              &\texttt{01032301210321230103210123032103012321012303212301032301210301232103230121032123010321}\\
              &\texttt{01230321230103230121030123210123032103012321032301210321230103230121030123210123032123}\\
              &\texttt{01032301210321230103210123032103012321032301210301232101230321230103230121030123210323}\\
              &\texttt{01210321230103210123032123010323012103012321012303212301032101230321030123210323012103}\\
              &\texttt{212301032301210301232101230321230103230121032123}\\
 \texttt{2}\to&\texttt{01032101230321030123210123032123010323012103012321012303210301232103230121032123010321}\\
              &\texttt{01230321230103230121030123210123032123010321012303210301232103230121030123210123032123}\\
              &\texttt{01032301210321230103210123032103012321012303212301032301210301232101230321030123210323}\\
              &\texttt{01210321230103230121030123210123032123010323012103212301032101230321030123210323012103}\\
              &\texttt{01232101230321230103230121030123210323012103212301032101230321230103230121030123210123}\\
              &\texttt{03210301232103230121032123010323012103012321012303212301032101230321030123210323012103}\\
              &\texttt{012321012303212301032301210301232103230121032123}\\
 \texttt{3}\to&\texttt{01032101230321030123210123032123010323012103012321012303210301232103230121032123010321}\\
              &\texttt{01230321230103230121030123210123032123010321012303210301232103230121030123210123032123}\\
              &\texttt{01032301210321230103210123032103012321012303212301032301210301232101230321030123210323}\\
              &\texttt{01210321230103230121030123210123032123010321012303210301232103230121030123210123032123}\\
              &\texttt{01032301210301232103230121032123010321012303212301032301210301232101230321030123210323}\\
              &\texttt{01210321230103230121030123210123032123010323012103212301032101230321030123210323012103}\\
              &\texttt{012321012303212301032301210301232103230121032123}
\end{array}$$
}

\begin{lemma}
Let $w$ be a $\frac74$-free 9-directed Dean word, then $|w|<114$.
\end{lemma}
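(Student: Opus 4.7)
The plan is to establish the bound by exhaustive depth-first backtracking over the tree of reduced words in $\Al_4^*$ that remain simultaneously $\frac{7}{4}$-free and $9$-directed. The root is the empty word, and from a node $w$ one branches to $wa$ for each $a \in \Al_4$ making $wa$ reduced (that is, $a$ differs from the last letter of $w$ and from its inverse). Each candidate $wa$ is subjected to three pruning tests: (i) the Dean property---no even-length suffix of $wa$ is a square; (ii) the $\frac{7}{4}$-free property---no suffix of $wa$ of length $\ell$ has a period $p$ with $\ell \ge \frac{7}{4}p$; and (iii) $9$-directedness---if $|wa|\ge 9$, the new length-$9$ suffix $s$ of $wa$ must not equal the reverse of any length-$9$ factor already present in $wa$. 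All three tests only need to inspect suffixes ending at the newly appended position, since any newly forbidden pattern must involve the last letter. A failure of any test triggers a backtrack, and the lemma is the computer-verified assertion that every branch of this tree terminates before depth~$114$.

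To make the search tractable, one quotients by the symmetry group acting on Dean words, namely the eight permutations of $\Al_4$ that preserve reduced words (listed in the Appendix) together with the reversal $w \mapsto w^R$; reversal interacts well with $d$-directedness, because $w$ is $d$-directed if and only if $w^R$ is, and both other properties are likewise closed under this group. This lets us canonicalise the first one or two letters of $w$ and cuts the explored tree by a factor of order $16$. The surviving traversal is then performed with incremental data structures---a running period table of the current suffix for tests~(i) and~(ii), and a hash set of the length-$9$ factors already seen together with their reverses for test~(iii)---so that each letter extension costs time linear in the current length rather than quadratic.

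The main obstacle is not conceptual but algorithmic: at intermediate lengths the pruned tree is still wide enough that a naive implementation would be too slow, so the bookkeeping for tests~(ii) and~(iii) must be updated rather than recomputed at each node, and the order in which children are tried should place the most restricted letters first in order to detect dead branches as early as possible. Once the exhaustive traversal finishes, the claimed bound is established, and the deepest surviving leaves of the search tree provide extremal witnesses of length at most~$113$ demonstrating that the threshold $114$ in the statement is essentially tight.
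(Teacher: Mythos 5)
Your proposal matches the paper's approach: the paper states that all negative results in this section are obtained by standard backtracking, i.e., an exhaustive computer search over reduced words pruned by the $\frac74$-freeness, square-freeness, and $9$-directedness tests, exactly as you describe. The implementation details you add (symmetry reduction, incremental suffix checks) are optimizations rather than new mathematical content, and the search remains correct even without them.
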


\begin{lemma}
There exists an infinite $\frac74^+$-free 6-directed Dean word.
\end{lemma}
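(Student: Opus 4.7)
My plan is to follow the template used for all the positive results in the previous two sections: exhibit a uniform morphism $h\colon \Al_k^* \to \Al_4^*$ such that the image of any sufficiently power-free word over $\Al_k$ is automatically a reduced word, $\frac74^+$-free, and $6$-directed. The source will be either a $\frac{7}{5}^+$-free word over $\Al_4$ or a $\frac{7}{4}^+$-free word over $\Al_3$, in analogy with the preceding lemmas; both sources are known to exist.

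The construction proceeds in three stages. First, a backtracking search produces a candidate uniform morphism $h$ of length $\ell$; based on the closely related statement for $\frac{27}{16}^+$-free $10$-directed Dean words, which required a $564$-uniform morphism, I would expect $\ell$ to be at most a few hundred, and probably considerably smaller since $\frac74^+$-freeness is a much weaker constraint. At this stage I also check that for every pair of distinct source letters $a,b$ the image $h(ab)$ contains none of the forbidden factors $02, 20, 13, 31$, which together with the square-freeness of the source guarantees that $h(w)$ is a reduced word. Second, I invoke the standard criterion of Ochem~\cite{Ochem:2004}: to show that $h$ preserves $\frac74^+$-freeness when restricted to the chosen power-free source class, it suffices to check that the image $h(f)$ of every source factor $f$ of a bounded length (computable from $\ell$ and the exponents involved) contains no $\frac74^+$-power. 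Third, I verify $6$-directedness by the same finite-check principle: every factor of length~$6$ of $h(w)$ occurs inside $h(f)$ for some source factor $f$ of bounded length, so one enumerates all length-$6$ factors arising this way and confirms that no such factor and its reverse both appear.

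The main obstacle will be finding a morphism $h$ of sufficiently small uniform length that simultaneously satisfies all three properties. The $6$-directedness condition is the most delicate: each letter image $h(a)$ must be chosen so that the image never produces a length-$6$ factor together with its reverse across morphism boundaries, while the exponent bound still forbids long repetitions and the reducedness condition constrains the boundary letters. Once a promising candidate survives a prefix test of reasonable length, the certifications in the second and third stages are finite and can be carried out mechanically by the same verification program used for the earlier lemmas in this section.
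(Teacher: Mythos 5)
Your proposal follows exactly the paper's approach: the paper exhibits an explicit 40-uniform morphism and states that the image of any $\frac{7}{5}^+$-free 4-ary word under it is a $\frac74^+$-free 6-directed Dean word, with reducedness, the exponent bound, and directedness all certified by the finite checks of~\cite{Ochem:2004} just as you describe. The only thing your write-up lacks is the explicit witness morphism itself (which your backtracking search would produce; the paper's turns out to be 40-uniform, much shorter than the 564-uniform one for the $\frac{27}{16}^+$ case, as you anticipated).
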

The image of any $\frac{7}{5}^+$-free 4-ary word by the following 40-uniform morphism is a $\frac74^+$-free Dean word that is 6-directed.

$$\begin{array}{ll}
 \texttt{0}\to&\texttt{0103012101232123012101230103012321230323}\\
 \texttt{1}\to&\texttt{0103012101232123010301230323012321230323}\\
 \texttt{2}\to&\texttt{0103012101230323012321230121012321230323}\\
 \texttt{3}\to&\texttt{0103012101230103012303230121012321230323}
\end{array}$$

\begin{lemma}
Let $w$ be a 5-directed Dean word, then $|w|<10$.
\end{lemma}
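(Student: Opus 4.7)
The plan is to establish the bound by an exhaustive backtracking search, following the pattern used for the other negative results in this section. I would build a tree rooted at the empty word, where the children of a node $u$ are the one-letter extensions $ua$ with $a\in\Al_4$ that keep $u$ reduced and square-free. As soon as $|u|\ge 5$, before committing to an extension I would also check that the new rightmost factor of length $5$ does not have its reverse already occurring as a factor of $u$, pruning the branch otherwise. The goal is to verify that no branch of this tree ever reaches depth $10$.

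Before running the enumeration, I would cut down the search using the group of symmetries that preserve both the class of Dean words and the $5$-directedness property, namely the eight letter-permutations listed in the Appendix combined with word reversal. This lets me fix, say, that $w$ begins with $01$, reducing the amount of work by more than an order of magnitude. The key combinatorial engine that makes the tree small is that $5$-directedness is extremely restrictive: once a factor $f$ of length $5$ is placed, the reverse $f^R$ is forbidden everywhere thereafter, and this propagates into constraints on every overlapping length-$5$ window. Together with square-freeness and reducedness, in practice the tree dies off well before depth $10$.

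The conclusion of the proof is then twofold. From the output of the search, no branch reaches length $10$, which gives $|w|<10$; and the fact that some branch does reach length $9$ shows the bound is tight. The main obstacle is not conceptual but presentational: in principle one could replace the computer verification by a manual case analysis on the first few letters of $w$, tracking the implications of $5$-directedness by hand, but the case split is large enough that the short backtracking program is both simpler and easier to reproduce, which is the route consistently taken by the other negative lemmas in this section.
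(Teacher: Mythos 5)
Your proposal matches the paper's approach: the paper gives no written proof for this lemma and states that all negative results in this section are obtained by standard backtracking, which is exactly the pruned exhaustive search you describe (and with only a few hundred Dean words of length 10 to check, the search is immediate). Your symmetry reduction fixing the prefix $01$ is a valid but unnecessary optimization here; the argument is correct.
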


\section{Letter frequencies}
\begin{theorem}\label{minfreq}
 The minimum frequency of a letter in an infinite Dean word is $\frac{8}{59}$.
\end{theorem}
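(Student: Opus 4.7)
The plan is to establish matching lower and upper bounds on the minimum letter frequency.

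For the lower bound, I would use a sliding-window argument based on an exhaustive computer search. Specifically, I would establish by a backtracking enumeration of all Dean words of length $59$ (a task of comparable scale to the enumeration already performed for Theorem~\ref{four}) that every such word contains each letter of $\Sigma_4$ at least eight times. Then any infinite Dean word $w$ decomposes into consecutive length-$59$ blocks $w_1 w_2 w_3 \cdots$, each itself a Dean word, so the count of a fixed letter $a$ in the prefix of length $59k$ is at least $8k$, giving asymptotic frequency at least $\frac{8}{59}$. The choice of window length $59$ is natural in view of Lemma~\ref{max58}, which already identifies $59$ as the combinatorial threshold at which all reduced letter-pairs must appear; it is plausible that this same threshold governs the minimum letter count as well.

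For the upper bound, I would exhibit an infinite Dean word in which some letter has frequency exactly $\frac{8}{59}$, following the morphic approach of Theorem~\ref{Actual3}. The natural candidates are images $h(\Hall)$ of the Hall-Thue word under a non-erasing morphism $h \colon \Sigma_3^* \to \Sigma_4^*$ chosen so that $h(\Hall)$ is a Dean word and the induced letter frequency of some coordinate equals $\frac{8}{59}$. The target frequency is determined by the word lengths $|h(0)|, |h(1)|, |h(2)|$ and the per-letter counts of $h(0), h(1), h(2)$, combined with the known letter frequencies of $\Hall$. A computer search over short candidate morphisms, with square-freeness certified via Corollary~\ref{cor:Currie} and reducedness verified on each image $h(ab)$ with $a \ne b$, should yield a suitable morphism. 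Alternatively, one could search directly for a Dean morphism on $\Sigma_4$ whose incidence matrix has a Perron eigenvector realising the target ratio.

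The main obstacle is the lower bound, specifically the claim that every Dean word of length $59$ contains each letter at least eight times. This is a purely computational assertion but must be checked carefully, and the constant $8$ has to be exact: if it turned out that some length-$59$ Dean word contains only seven copies of some letter, the crude sliding-window argument would fall short of $\frac{8}{59}$ and one would need to refine it. A standard fallback is a linear-programming relaxation over a finite deterministic automaton recognising Dean factors up to a large threshold length, with stationary state probabilities as variables and the frequency of the target letter as the objective; such LP-based frequency bounds are standard in this literature and, combined with a matching construction, typically suffice.
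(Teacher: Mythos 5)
Your upper-bound strategy is plausible but differs from the paper's: the paper does not map the Hall--Thue word, but instead gives a morphism $f\colon \Al_4^*\to\Al_4^*$ with $|f(0)|=|f(2)|=3$, $|f(1)|=|f(3)|=115$, whose images of the even letters contain no \texttt{3} and whose images of the odd letters each contain $16$ occurrences of \texttt{3}; since even and odd letters alternate in a reduced word, the frequency of \texttt{3} in the image of \emph{any} Dean word is exactly $16/118=\frac{8}{59}$, independently of the preimage. Square-freeness is then argued by a synchronisation argument (the markers $010$ and $212$ occur only as $f(0)$ and $f(2)$, forcing the period of any long square to be a multiple of $118$ and hence forcing a square in the preimage), plus a finite check for short periods. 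Your route via $h(\Hall)$ is not hopeless --- the letter frequencies of $\Hall$ are each $\frac13$, so one would need $59\,|h(012)|_3=8\,|h(012)|$ --- but you have not produced the morphism, and certifying square-freeness via Corollary~\ref{cor:Currie} alone does not address why the resulting frequency should be \emph{minimal}.

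The genuine gap is in your lower bound. The computational claim you rest it on --- that every Dean word of length $59$ contains each letter at least eight times --- is false, and the paper's own extremal word witnesses this. In $f(\texttt{1})$ the occurrences of \texttt{3} sit at positions $1,9,17,25,31,39,47,55,61,\dots$, i.e.\ at gaps of $8$ and occasionally $6$; the factor occupying positions $2$ through $60$ is a Dean word of length $59$ containing only seven \texttt{3}'s. This is unavoidable: in a word whose \texttt{3}-frequency is exactly $\frac{8}{59}$ the occurrences cannot be spaced so that every window of length $59$ catches eight of them. So the sliding-window argument at width $59$ falls short of $\frac{8}{59}$, exactly the failure mode you flagged. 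The paper works instead at width $118$: a backtracking computation shows that every Dean word of length $118$ with only $15$ occurrences of \texttt{3} is \emph{not extendable}, hence every length-$118$ factor of an infinite Dean word contains at least $16$ occurrences, giving $\frac{16}{118}=\frac{8}{59}$. Note the two refinements you would need: the correct window length is $118$, not the threshold $59$ from Lemma~\ref{max58}, and the count must be taken over \emph{extendable} words (established by backtracking), not over all finite Dean words of that length. Your LP-over-an-automaton fallback could in principle recover the bound, but it is not carried out and is not what the paper does.
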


The image of every Dean word by the morphism $f$ below is a Dean word such that the frequency of the letter \texttt{3} is $\frac{8}{59}$.

$$\begin{array}{ll}
 f(\texttt{0})=&\texttt{010}\\
 f(\texttt{1})=&\texttt{30121012321012103012101230121032101210}\\
               &\texttt{30121012321012103210123012101232101210}\\
               &\texttt{301210123012103210121030121012321012103}\\
 f(\texttt{2})=&\texttt{212}\\
 f(\texttt{3})=&\texttt{32101210301210123210121032101230121012}\\
               &\texttt{32101210301210123012103210121030121012}\\
               &\texttt{321012103210123012101232101210301210123}
\end{array}$$

It is easy to see that the $f$-image of a Dean word is reduced.
Let us check that it is also square free.
A computer check rules out squares of period at most $500$.
Notice that $|f(\texttt{0})|=|f(\texttt{2})|=3$ and $|f(\texttt{1})|=|f(\texttt{3})|=115$.
Moreover, the factor $\texttt{010}$ (resp. $\texttt{212}$) only appears as the $f$-image of \texttt{0} (resp. \texttt{2}).
So the period of a potential square in the $f$-image of a Dean word must be a multiple of $|f(\texttt{01})|=118$.
Since the longest common prefix (resp. suffix) of $f(\texttt{1})$ and $f(\texttt{3})$ has length $1$, our square implies the existence of a square
with the same period and that contains the $f$-image of a letter as a prefix.
This forces a square in the pre-image by $f$, which is a contradiction.

A computer check shows that every Dean word of length $118$ and containing only $15$ occurrences of the letter \texttt{3} is not extendable. Thus $\frac{8}{59}$ is an optimal bound.

\section{Growth rate}

\newcommand{\minsuff}{\Lambda}
Let $T_n$ be the set of Dean words of length $n$. We use the same technique as in \cite{RC} (which is really close to the technique introduced in \cite{shurgrowthrate} that was itself inspired by \cite{Kolpakov2007}) to show the following.

\begin{theorem}\label{nbDeanWords}
  For all $n$, 
  $$|T_{n}|\ge 1.4581846^n\,.$$
\end{theorem}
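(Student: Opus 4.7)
The plan is to apply the weighted transition-graph technique of Shur and Kolpakov~\cite{shurgrowthrate,Kolpakov2007}, as refined in~\cite{RC}, to the Dean-word language. Since only a lower bound on $|T_n|$ is required, the aim is to exhibit a finite directed multigraph whose walks embed injectively into $\bigcup_n T_n$ and whose Perron eigenvalue is at least $\alpha=1.4581846$; then $|T_n|\geq \alpha^n$ follows (up to a constant that can be absorbed by slightly enlarging the state space, with small $n$ verified directly from Table~\ref{DeanNumber}).

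First, I would fix an integer $N$ and take as vertices the Dean words of length $N$. For each vertex $u$ and each letter $a\in\Al_4$, I would place a directed edge from $u$ to the length-$N$ suffix of $ua$ precisely when $ua$ is reduced and has no suffix that is a square. This yields a nonnegative transition matrix $M$ such that every walk of length $n$ starting at a vertex $u$ encodes a distinct Dean word of length $N+n$ extending $u$. Next, I would enumerate the Dean words of length $N$ by computer, build $M$, and certify the spectral bound $\rho(M)\geq \alpha$ rigorously via the Collatz--Wielandt inequality: produce (by power iteration from the all-ones vector) a nonnegative vector $v$ with $Mv\geq \alpha v$ entrywise. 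Perron--Frobenius then delivers the claimed exponential growth.

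The main obstacle is the interplay between the finite truncation at length $N$ and the \emph{unbounded lookback} needed to forbid every square: an extension by $a$ can in principle create a square whose period exceeds $N$ and is therefore invisible to the finite state space. The Shur--Kolpakov weighting (which is what the function $\minsuff$ in the authors' notation tracks) resolves this by working inside a factor-closed sub-language in which all long-period squares are excluded in advance, so that the walk count is guaranteed to undercount, never overcount, the true $|T_n|$. The practical challenge is to choose $N$ just large enough that the Perron eigenvalue clears $1.4581846$ while keeping the spectral computation tractable.
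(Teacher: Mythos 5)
There is a genuine gap, and it sits exactly at the point you flag as ``the main obstacle.'' Your transition graph counts walks that correspond to reduced words containing no square of period at most $N$; this language strictly \emph{contains} the Dean words (a walk can spell a word with a square of period larger than $N$), so the walk count is an over-count and the map from walks to $\bigcup_n T_n$ is not an injection. Your proposed fix --- ``working inside a factor-closed sub-language in which all long-period squares are excluded in advance, so that the walk count is guaranteed to undercount'' --- does not exist as a finite-state object: the language of square-free reduced words is not regular, so no finite automaton (weighted or not) recognizes a factor-closed regular sub-language that both excludes all squares and retains growth rate above $1.458$. The Shur--Kolpakov/\cite{RC} weighting is not a device for excluding long squares in advance; it is the bookkeeping that makes a \emph{subtraction} argument go through.

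Concretely, the paper's proof defines $\widehat{T_n}=\sum_w C_w|T_n^{(w)}|$ using an approximate Perron eigenvector $(C_w)$ of the short-square automaton (states indexed by $\minsuff$, prefixes of minimal squares of period at most $p=36$), with eigenvalue $\alpha\approx1.4581862$. It then proves $\widehat{T_{n+1}}\ge\beta\widehat{T_n}$ by induction: the one-letter extensions avoiding short squares contribute at least $\alpha\widehat{T_n}$ (your Collatz--Wielandt step, which is fine), but one must then \emph{subtract} the weight of the ``wrong'' extensions that create a square of period $i>p$. Each such word is determined by its prefix in $T_{n+1-i}$ with the same $\minsuff$-state, so $\widehat{F_i}\le\widehat{T_{n+1-i}}\le\beta^{1-i}\widehat{T_n}$ by the induction hypothesis, and summing the geometric series over even $i>p$ costs $\frac{\beta^{2-\lceil(p+1)/2\rceil}}{\beta^2-1}\widehat{T_n}$. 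The bound closes only because $\alpha$ exceeds the target $\beta=1.4581846$ by enough to absorb this loss. Your proposal contains no mechanism for this subtraction and no quantification of the long-square loss, which is the heart of the argument; without it the spectral computation alone proves nothing about $|T_n|$ from below.
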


Let $p=36$ and let $\mathcal{F}^{\le p}$ be the set of reduced words that contain no squares of period at most $p$. Let $\minsuff$ be the set of reduced words that are prefixes of minimal squares of period at most $p$.
For any $w$, we let $\minsuff(w)$ be the longest word from $\minsuff$ that is a suffix of $w$.
For any set of words $S$, and any $w\in \minsuff$, we let $S^{(w)}$ be the set of words from $S$ whose longest prefix that belongs to $\minsuff$ is $w$, that is $S^{(w)}=\{u\in S: \minsuff(u)=w\}$.
\begin{lemma}\label{bycomputer}
There exist coefficients $(C_w)_{w_\in\minsuff}$ such that $C_0>0$ and for all $v\in \minsuff$,
\begin{equation}\label{eqpassage}
\alpha C_v\le \sum_{\substack{a\in\{0,1,2,3\}\\va\in\mathcal{F}^{\le p}}} C_{\minsuff(va)}
\end{equation}
where $\alpha = 33075185 / 22682414 \approx 1.4581862847578$.
\end{lemma}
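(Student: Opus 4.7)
The plan is a computer verification following the Perron--Frobenius framework of \cite{RC,shurgrowthrate,Kolpakov2007}. Viewing \eqref{eqpassage} as the coordinate-wise matrix inequality $\alpha C \le M C$, where $M \in \mathbb{Z}_{\ge 0}^{\minsuff \times \minsuff}$ has entries $M_{v,w} = \#\{a \in \Al_4 : va \in \mathcal{F}^{\le p} \text{ and } \minsuff(va) = w\}$, the existence of a nonnegative $C$ with $C_0 > 0$ satisfying the inequality amounts to showing $\rho(M) \ge \alpha$ and exhibiting (a rational approximation of) the right Perron eigenvector of $M$.

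First, I would enumerate $\minsuff$ explicitly. Every $w \in \minsuff$ is a proper prefix of a minimal square $uu$ with $|u| \le p = 36$, so $|w| < 2p$ and $\minsuff$ is finite. A breadth-first search constructs it: starting from the empty word, for each $v \in \minsuff$ and each $a \in \Al_4$, include $va$ in $\minsuff$ iff $va$ is reduced and some suffix of $va$ is still a strict prefix of a not-yet-completed square of period at most $p$. The same bookkeeping yields, for every state $v$ and letter $a$, both the test $va \in \mathcal{F}^{\le p}$ and the destination $\minsuff(va)$, and hence the whole matrix $M$.

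Second, I would compute an approximate right Perron eigenvector by power iteration $C^{(n+1)} = M C^{(n)} / \alpha$ started from $C^{(0)} = \mathbf{1}$: on the dominant strongly connected component of $M$ the iterates stabilise after rescaling, and on the transient states (including the empty state) the limit inherits positivity by reachability, so $C_0 > 0$ is automatic. The specific rational $\alpha = 33075185 / 22682414$ is plainly chosen just below $\rho(M)$ so that rounding the limit vector to rationals with a reasonably small common denominator still satisfies $\alpha C \le M C$ entry by entry. The inequality is then certified by a single pass of exact rational arithmetic over $\minsuff$.

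The main obstacle is computational rather than conceptual: $|\minsuff|$ grows rapidly with $p$, and at $p = 36$ it likely contains many thousands of states, so efficient enumeration of $\minsuff$, sparse storage of $M$, and the final exact verification of $|\minsuff|$ inequalities with rational coefficients all demand careful implementation. Beyond that, the lemma contains no mathematical novelty relative to the standard Perron--Frobenius lower bound for growth rates of regular factorial languages; the entire content is the correct combinatorial setup and the numerical certificate produced by the computer.
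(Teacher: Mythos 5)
Your proposal is correct and matches the paper's own proof: both set up the transfer matrix $M$ indexed by $\minsuff$, take $\alpha$ as (an approximation of) its largest eigenvalue, obtain $(C_w)$ by power iteration starting from the all-ones vector, and certify inequality \eqref{eqpassage} by a direct computer check. The extra details you supply (BFS enumeration of $\minsuff$, exact rational verification) are implementation refinements of the same argument.
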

The proof of this lemma relies on computer verifications.
If one let $M\in\mathbb{Z}^{|\minsuff|\times|\minsuff|}$ be the matrix indexed over $\minsuff$ such that for all $u,v\in\minsuff$, $$M_{u,v}=|\{a\in\{0,1,2,3\}: u=\minsuff(va)\}|\,.$$
Then one can choose for the coefficients $\alpha$ the largest eigenvalue and for $(C_w)_{w\in\minsuff}$ any corresponding eigenvector. To find a vector close enough to this eigenvector, one can simply iterate the matrix (starting with the vector containing 1 everywhere) and renormalize the vector. One can then simply verify that the vector obtained after $n$ iterations (say $n=100$), has the desired property. We implemented this procedure in C++ \footnote{The program can be dowloaded at \url{https://www.lirmm.fr/~mrosenfeld/codes/Finding_the_coefficient_for_Dean_words.cpp}}
and it verifies Lemma \ref{bycomputer} in less than 10 minutes and using 9GB of RAM on a laptop.

Notice, that this procedure to find the largest eigenvalue of a matrix converges quickly, so we know that $\alpha$ should be a really good approximation of this quantity. 
Since this matrix is the adjacency matrix of an automaton that recognizes reduced words without squares of period at most $p$, we deduce that there exists a constant $C$ such that the number of such words of length $n$ is as most $C\cdot 1.4581863^n$. The same bound holds for $|T_n|$, that is, for all $n$, $|T_n|< C\cdot 1.4581863^n$. We deduce that Theorem \ref{nbDeanWords} is almost sharp.

For the rest of this section, let us fix coefficients $(C_w)_{w_\in\minsuff}$ that respect the conditions given by Lemma \ref{bycomputer}.
For each set $S$ of words, we let $$\widehat{S} =\sum_{w\in \minsuff} C_w |S^{(w)}|\,.$$
Whenever we mention the weight of a word $w$ in informal definitions,
we mean $C_{\minsuff(w)}$. We are now ready to state our main Lemma.

\begin{lemma}\label{mainlemma}
Let $\beta>1$ be a real number such that
$$\alpha-\frac{\beta^{2-\ceil{\frac{p+1}{2}}}}{\beta^2-1}\ge \beta\,.$$
  Then for all $n$, 
  $$\widehat{T_{n+1}}\ge \beta\widehat{T_n}\,.$$
\end{lemma}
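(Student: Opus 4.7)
The plan is to proceed by strong induction on $n$. For small $n$ (up to roughly $2\ceil{(p+1)/2}$), no square of period exceeding $p$ can fit in a word of length $n+1$, so the statement reduces directly to Lemma~\ref{bycomputer}. For the inductive step, I fix $n$, assume the lemma's conclusion for all smaller indices, and hence use $\widehat{T_{n-k}}\le\beta^{-k}\widehat{T_n}$ for every $k\ge 0$.

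The key identity decomposes $\widehat{T_{n+1}}$ into a main term from Lemma~\ref{bycomputer} minus a correction for ``bad'' extensions. For any $u\in T_n\subseteq\mathcal{F}^{\le p}$, the membership $ua\in\mathcal{F}^{\le p}$ and the value $\minsuff(ua)$ depend only on $\minsuff(u)$ and the letter~$a$, because $\minsuff(u)$ encodes all suffix information relevant to detecting squares of period at most~$p$. Applying Lemma~\ref{bycomputer} with $v=\minsuff(u)$ and summing over $u\in T_n$ yields
\[
\sum_{u\in T_n}\ \sum_{a\,:\,ua\in\mathcal{F}^{\le p}} C_{\minsuff(ua)}\ \ge\ \alpha\widehat{T_n}.
\]
This overcounts $\widehat{T_{n+1}}$ by the weighted contribution of pairs $(u,a)$ with $u\in T_n$ and $ua\in\mathcal{F}^{\le p}\setminus T_{n+1}$. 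Since $u$ itself is square-free, any such $ua$ must end in a fresh square $ss$ with $|s|=q>p$.

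To bound the correction, I exploit two structural observations. First, reduced words over $\Al_4$ alternate between even- and odd-parity letters, so every square in a Dean word has even period; hence the period $q$ of the terminating square is an even integer with $q\ge 2\ceil{(p+1)/2}$. Second, for each such fixed $q$, the map sending a bad pair $(u,a)$ with terminating square of period $q$ to the length-$(n+1-q)$ prefix $v$ of $ua$ is an injection into $T_{n+1-q}$: this prefix is itself a Dean word (being a factor of $u$), and it determines $ua$ via $ua=v\cdot s$ where $s$ is the length-$q$ suffix of $v$. Transferring the unweighted injection into a weighted bound, using that $\minsuff(ua)$ is determined by the tail of the shorter word $v$ once $q$ exceeds the length of every element of $\minsuff$, and iterating the inductive hypothesis $\widehat{T_{n+1-q}}\le\beta^{1-q}\widehat{T_n}$, produces a geometric series in even $q\ge 2\ceil{(p+1)/2}$ whose total is bounded by $\frac{\beta^{2-\ceil{(p+1)/2}}}{\beta^2-1}\widehat{T_n}$. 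Combining with the main term and the hypothesis on~$\beta$ gives $\widehat{T_{n+1}}\ge\beta\widehat{T_n}$, closing the induction.

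The main obstacle will be the weighted bookkeeping in the correction step: the injection only provides an unweighted bound $|T_{n+1-q}|$, whereas the target inequality lives in the $\widehat{\cdot}$-norm. The crucial observation that unlocks this is that elements of $\minsuff$ have length bounded in terms of~$p$, so for any bad extension with period $q$ large enough, $\minsuff(ua)=\minsuff(v)$; the remaining (finitely many) intermediate periods can be absorbed into the leading geometric tail. The parity constraint on periods, which spaces the allowed values of~$q$ by $2$, is what produces the denominator $\beta^2-1$ in place of the $\beta-1$ one would obtain from a naive sum over all periods.
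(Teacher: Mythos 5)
Your overall architecture matches the paper's proof: induction on $n$, a decomposition of $\widehat{T_{n+1}}$ into a main term $\ge\alpha\widehat{T_n}$ coming from Lemma~\ref{bycomputer} (valid because $\minsuff(u)$ determines which extensions stay in $\mathcal{F}^{\le p}$ and their $\minsuff$-classes), minus a correction for extensions that close a square of period $q>p$, bounded by an injection into $T_{n+1-q}$ and a geometric series over even $q$. All of that is sound and is exactly what the paper does.

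The genuine gap is in the weighted bookkeeping step you yourself flag as the main obstacle. To convert the unweighted injection $F_q\hookrightarrow T_{n+1-q}$ into the weighted bound $\widehat{F_q}\le\widehat{T_{n+1-q}}$ you need $|F_q^{(w)}|\le|T_{n+1-q}^{(w)}|$ for \emph{every} $w\in\minsuff$, i.e.\ you need $\minsuff(ua)=\minsuff(v)$, which requires $q>|\minsuff(ua)|$. Your argument for this (``once $q$ exceeds the length of every element of $\minsuff$'') only covers $q>2p$, since elements of $\minsuff$ are prefixes of minimal squares of period at most $p$ and hence can have length up to $2p$. For the periods $p<q\le 2p$ (here $38\le q\le 72$) you offer no argument, and the claim that they ``can be absorbed into the leading geometric tail'' is not a proof: these are precisely the \emph{largest} terms $\beta^{1-q}$ of the series, they \emph{are} the head of the tail, and any multiplicative loss on them would break the inequality, since the hypothesis $\alpha-\frac{\beta^{2-\ceil{(p+1)/2}}}{\beta^2-1}\ge\beta$ is satisfied with only about $10^{-6}$ of slack. (A fallback via $\max_w C_w/\min_w C_w$ also fails, as Lemma~\ref{bycomputer} only guarantees $C_0>0$, not that all coefficients are positive.) The paper closes this hole with a separate combinatorial argument: if $u\in F_q$ had $|\minsuff(u)|\ge q$, then $\minsuff(u)$ has some period $j\le p$, and combining that periodicity with the terminal square of period $q$ yields $v_{j+1}\cdots v_{q}=v_{q+1}\cdots v_{2q-j}$, a square that is \emph{not} a suffix of $u$, contradicting the square-freeness of the length-$n$ prefix of $u$ (or the absence of squares of period $\le p$). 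You need this interior-square contradiction, or an equivalent, to cover all $q>p$; without it the inductive step does not go through.
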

\begin{proof}
  We proceed by induction on $n$. Let $n$ be an integer such that the lemma holds for any integer smaller than $n$ and let us show that 
  $\widehat{T_{n+1}}\ge \beta\widehat{T_n}$.
  
  By induction hypothesis, for all $i$,
\begin{equation}\label{IHPS}
 \widehat{T_{n}}\ge \beta^i\widehat{T_{n-i}}
\end{equation}

A word of length $n+1$ is \emph{good}, if its prefix of length $n$ is in $T_{n}$, if it is a reduced word and if it contains no square of period at most $p$. 
The set of good words is $G$. A word is \emph{wrong}, if it is good, but contains a square of period larger than $p$.
The set of wrong words is $F$.
Then for any $w$, $T_{n+1}=G\setminus F$ and

\begin{equation}\label{eqSGF}
 \widehat{T_{n+1}}\ge \widehat{G}-\widehat{F}
\end{equation}

Let us first lower-bound $\widehat{G}=\sum_{w\in \minsuff} |G(w)|C_w $.

The \emph{extensions} of any word $v\in S_n$ that belongs to $G$ are the words of the form 
$va$ where $a\in\{0,1,2,3\}$ and such that $va\in\mathcal{F}^{\le p}$.
By definition, $\minsuff(v)$ is the longest suffix of $v$ amongst prefixes of squares of period of length at most $p$. This implies that for any Dean word $v$ and for any letter $u$, $vu\in\mathcal{F}^{\le p}$ if and only if $\minsuff(v)u\in\mathcal{F}^{\le p}$. For the same reason, for any square-free word $v$ and for any word $u$, $\minsuff(vu)=\minsuff(\minsuff(v)u)$.
We then deduce that the contribution of the extentions of any word $v\in T_n$ to $\widehat{G}$ is 
$$\sum\limits_{\substack{a\in\{0,1,2,3\}\\va\in\mathcal{F}^{\le p}}} C_{\minsuff(va)}=
\sum\limits_{\substack{a\in\{0,1,2,3\}\\\minsuff(v)a\in\mathcal{F}^{\le p}}} C_{\minsuff(\minsuff(v)a)}
\ge\sum\limits_{\substack{a\in\{0,1,2,3\}\\\minsuff(v)a\in\mathcal{F}^{\le p}}} C_{\minsuff(\minsuff(v)a)}\,.$$
By Lemma \ref{bycomputer}, we deduce that the contribution of the extentions of any word $v\in T_n$ to $\widehat{G}$ is at least $\alpha C_{\minsuff(v)}$.
We sum the contribution over $T_n$ 
\begin{equation}
\widehat{G}\ge\sum_{v\in T_n}\alpha C_{\minsuff(v)}
=\sum_{u\in \minsuff}\alpha C_u |T_n^{(u)}|=\alpha \widehat{T_{n}}\label{boundG}
\end{equation}

Let us now bound $F$.
For all $i$, let $F_{i}$ be the set of words from $F$ that end with a square of period $i$.
Clearly, $F= \cup_{i\ge1} F_{i}$ and 
\begin{equation}\label{FFi}
  \widehat{F}\le \sum_{i\ge1} \widehat{F_{i}}\,.
\end{equation}

By definition of $G$ and $F$, for every $i\le p$, $ |F_i|=0$ and $\widehat{F_i}=0$. Moreover, since reduced words contain no square of odd period, for all $i$, $|F_{2i+1}|=0$ and $\widehat{F_{2i+1}}=0$.

Let us now upper-bound $\widehat{F_i}$ for any even $i>p$. 

Let $u\in F_{i}$ be a word. 
For the sake of contradiction suppose, $i\le\minsuff(u)$ and let $v$ be the square of period $i$ at the end of $u$ and let $k= |\minsuff(u)|$. 
By hypothesis, $v_1\cdots v_{i}= v_{i+1}\cdots v_{2i}$.
There exists $j\le p$ such that $\minsuff(u)$ has period $j$, but since $\minsuff(u)$ is the suffix of length $k$ of $v$, 
$v_{2i+1-k}\cdots v_{2i-j}= v_{2i+1+j-k}\cdots v_{2i}$ (using that $j\le p<i\le k$ one easily verifies that the indices are valid).
So in particular, using the two previous equations together, 
$$v_{j+1}\cdots v_{i} = v_{i+j+1}\cdots v_{2i}= v_{i+1}\cdots v_{2i-j}$$
(one easily verifies that all the indices are valid).
So there is a square inside $u$ which is not a suffix of $u$ which is a contradiction, since by definition of $F$ the only squares inside $u$ are suffixes of $u$.
Hence, $i>\minsuff(u)$.

For any $u\in F_i$, $u$ ends with a square of period $i$ so the last $i$ letters are uniquely determined by the prefix $v$ of length $|u|-i$. By definition, 
$v\in T_{n+1-i}$. Moreover, by the previous paragraph the suffix of size $|\minsuff(u)|+1$ of $u$ and $v$ are the same which implies $\minsuff(v) = \minsuff(u)$ and $v\in T_{n+1-i}^{(\minsuff(u))}$. So for every $w\in \minsuff$, any word of $ F_i^{(w)}$ is uniquely determined by a word of  $T_{n+1-i}^{(w)}$, which implies $|F_i^{(w)}|\le|T_{n+1-i}^{(w)}|$. By summing over all $w\in\minsuff$,
$$\widehat{F_i}\le\widehat{T_{n+1-i}}\,.$$
Together with \eqref{IHPS} it yields,
$$\widehat{F_i}\le\widehat{T_{n}}\beta^{1-i}\,.$$

We can now sum the $F_i$ to upper bound $F$ using equation \eqref{FFi},
$$\widehat{F}\le\sum_{i\ge\ceil{\frac{p+1}{2}}} \widehat{T_n} \beta^{1-i}=\frac{\beta^{2-\ceil{\frac{p+1}{2}}}}{\beta^2-1} \widehat{T_n}\,.$$

Using this bound and \eqref{boundG} with \eqref{eqSGF} yields
$$\widehat{T_{n+1}}\ge\widehat{T_{n}}\left(\alpha-\frac{\beta^{2-\ceil{ \frac{p+1}{2}}}}{\beta^2-1} \right)\,.$$
By theorem hypothesis we deduce 
$$\widehat{T_{n+1}}\ge\beta\widehat{T_{n}}$$ which concludes our proof.
\end{proof}

One easily verifies that $\beta=1.4581846$ satisfies the condtions of Lemma \ref{mainlemma} and we deduce the following corollary.
\begin{lemma}
  For all $n$, 
  $$\widehat{T_{n+1}}\ge 1.4581846\widehat{T_n}\,.$$
\end{lemma}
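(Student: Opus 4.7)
The plan is to derive this lemma as an immediate application of Lemma \ref{mainlemma}, which already supplies the bound $\widehat{T_{n+1}}\ge \beta\widehat{T_n}$ for every real $\beta>1$ satisfying the numerical condition
\[
\alpha - \frac{\beta^{2-\lceil(p+1)/2\rceil}}{\beta^2-1}\ge \beta.
\]
So the entire task reduces to verifying this single inequality for $\beta = 1.4581846$, with the values $p = 36$ and $\alpha = 33075185/22682414$ fixed at the start of the section. Since $\lceil 37/2\rceil = 19$, the concrete condition to check is
\[
\frac{33075185}{22682414} - \frac{\beta^{-17}}{\beta^2-1}\ge \beta.
\]

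To do this rigorously, I would represent $\beta$ as the exact rational $14581846/10^7$, compute $\alpha-\beta$ as an explicit positive rational (exploiting the fact that $\beta$ was chosen to sit just below the power-iteration approximation $\alpha\approx 1.4581862847578$), and bound $\beta^{-17}/(\beta^2-1)$ from above using certified interval arithmetic or a guaranteed rational enclosure. The only delicate point is that the margin $\alpha-\beta$ is tiny, of order $10^{-6}$, so sufficient precision must be used for the comparison to be conclusive; there is no conceptual content beyond this numerical check. Once the inequality is confirmed, Lemma \ref{mainlemma} applies verbatim and delivers $\widehat{T_{n+1}}\ge 1.4581846\,\widehat{T_n}$ for every $n$.
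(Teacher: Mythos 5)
Your overall strategy matches the paper's: the statement is intended to follow by plugging $\beta=1.4581846$ into Lemma~\ref{mainlemma}. But the single step you reduce everything to --- certifying
$\alpha-\frac{\beta^{-17}}{\beta^2-1}\ge\beta$, with the exponent $2-\lceil(p+1)/2\rceil=-17$ read literally from the statement of Lemma~\ref{mainlemma} --- fails, and no amount of certified interval arithmetic will rescue it. Numerically $\beta^{17}\approx 609.3$, so $\beta^{-17}/(\beta^2-1)\approx 1.46\times 10^{-3}$, whereas the available slack is only $\alpha-\beta\approx 1.68\times 10^{-6}$: the inequality you propose to check is false by three orders of magnitude. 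You write ``once the inequality is confirmed'' without ever estimating either side; a rough estimate would have revealed the problem immediately.

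The resolution is that the exponent in the statement of Lemma~\ref{mainlemma} does not match its own proof. There, the only nonzero terms $\widehat{F_i}$ have even period $i>p$, i.e., $i=2j$ with $j\ge\lceil(p+1)/2\rceil=19$, and summing $\widehat{T_n}\beta^{1-2j}$ over these $j$ gives $\frac{\beta^{3-2\lceil(p+1)/2\rceil}}{\beta^2-1}\widehat{T_n}=\frac{\beta^{-35}}{\beta^2-1}\widehat{T_n}$ (the denominator $\beta^2-1$ in the stated formula already indicates that the geometric series is in powers of $\beta^{-2}$). With that correction one finds $\beta^{-35}/(\beta^2-1)\approx 1.640\times 10^{-6} < \alpha-\beta\approx 1.685\times 10^{-6}$, so the condition does hold --- by a margin of only about $4\times 10^{-8}$, which is exactly why $\beta=1.4581846$ was chosen. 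Your plan is therefore salvageable, and your insistence on exact rational or interval arithmetic is justified given that tiny margin, but as written the verification you describe cannot succeed: a correct proof must first diagnose and repair the exponent before performing the numerical check.
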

It implies that for all $n$, $\widehat{T_{n}}\ge 1.4581846^{n-1}\widehat{T_1}\ge 1.4581846^{n-1} C_0\,.$
There exists a constant $C$, such that $|T_{n}| \ge C 1.4581846^n$.
Using the fact that the set of Dean words is factorial, it is routine to deduce that for all $n$, 
  $$|T_{n}|\ge 1.4581846^n\,.$$ This proves Theorem \ref{nbDeanWords}.

Let us mention that, using the same technique, we were able to show that there exist at least $1.12^n$ $\frac53^+$-free Dean words of length $n$.

\section{Problems}

%The issue for the sets $\Ex_n(w)$ with $n\ge 4$ has a different nature.
%Indeed, the cases of avoided words of length less than $n$ give already a multitude of solutions to $\Ex_n(w)$.

%Also, more importantly, t
There are Dean words of length $n\ge7$ that do not occur in any infinite Dean words,
i.e., they cannot be extended in any infinite Dean word (but a finite amount). One example of such a word is $0103010$.

Due to minimality of $\Ex_l(w)$, there is no obvious relation between $\ex_l(w)$ and $\ex_{l+1}(w)$.
\begin{example}
We have $\Ex_3(g(\Hall))=S_1$ for the morphism $g$ of Theorem~\ref{Actual3}.
However, $\ex_4(g(\Hall))= \ex_5(g(\Hall))= \ex_6(g(\Hall))=0$.
%The word $g(\Hall)$ does not avoid any word of length~$k=4,5,6$ that do not have a factor from the set $S_1$.
\qed\end{example}

\begin{problem}\label{prob:1}
Are there infinite Dean words $w$ such that $\ex_l(w) >0$ for all $l \ge 3$?
\end{problem}

If Problem~\ref{prob:1} has a solution $w$ then the (nonempty) sets $\Ex_l(w)$
stabilise, i.e., for each $l$, there is a bound $N_l$ such that for any prefix $v$ of $w$ with $|v| \ge N_l$ one has
$\Ex_l(v)=\Ex_l(w)$ (since always $\Ex_l(w) \subseteq \Ex_l(v))$. We conjecture that in such a case the integers $\ex_l(w)$ are very small.

As mentioned, there are Dean words that are non extendable.
We formally define the set of extendable Dean worrds as following:
\[
\Omega = \{ w \mid \text{ for all $n$, there is a Dean word } u_n w v_n \text{ with } |u_n|, |v_n| \ge n\}.
\]

\begin{problem}
Give a characterisation of the set $\Omega$.
\end{problem}

For a Dean word $w$, let $m_l(w) = |M_l(w)|$ denote the size of the set of \emph{minimal extendable avoided words}:
\[
M_l(w) = \Omega \cap \Ex_l(w).
%\{ v \in \Omega \mid v \in \Ex_n(w) \text{ and $v$ has no factors from } \cup_{i<n}\Ex_i(w)
%\}.
\]

\begin{problem}\label{prob:2}
Are there infinite Dean words $w$ such that $m_l(w)=0$ for all $l$?
\end{problem}
A solution word for Problem~\ref{prob:2} would contain all words in $\Omega$. 
Clearly, if $m_j(w)=0$ then also $m_k(w)=0$ for all $k < j$.

\begin{example}
The Dean word $w=010301210123032123210323010301$ of length 30 satisfies
$\ex_2(w)=\ex_3(w)=\ex_4(w)=0$ but $\ex_5(w)=16$.
%%+01032+01232+03010+03210+10121+10321+12103+12301
%%+21030+21230+23012+23212+30123+30323+32101+32303
\qed\end{example}

\begin{problem}
Each Dean word $w \in \Al_{3}^*$ alternates between even and odd letters,
and thus $w$ is a letter-to-letter shuffle of even
$w_{e} \in \{ 0,2\}^*$ and odd words $w_{o} \in \{1,3\}^*$.
What are the \emph{forbidden words} $v \in \{ 0,2\}^*$ that are not of the form $v = w_{e}$?
\end{problem}

\section*{Appendix}

\noindent
(1)
Below we list in the cycle form the permutations of $\Al_4$ that preserve reduced words (apart from the identity):
\[
(0\, 2),\ 
(1\, 3),\
(0\, 2)(1\, 3),\
(0\, 1)(2\, 3), \ 
(0\, 3)(2\, 1), \
(0\, 1 \, 2 \, 3),\
(0\, 3 \, 2 \, 1).\
\]

\medskip
\noindent
(2)
The list of the factors of length~7 of the Hall-Thue word $\Hall$ needed in Theorem~\ref{Actual3}: 
\begin{align*}
0120210&&
1202101&&
2021012&&
0210121&&
2101210\\
1012102&&
0121020&&
1210201&&
2102012&&
1020120\\
0201202&&
2012021&&
1202102&&
2021020&&
0210201\\
1020121&&
0201210&&
2012101&&
0121012&&
1210120\\
2101202&&
1012021&&
\end{align*}

\bibliographystyle{plain}
\bibliography{bib-small}

\begin{thebibliography}{10}

\bibitem{Allouche-Shallit}
Jean-Paul {Allouche} and Jeffrey {Shallit}.
\newblock {\em {Automatic {S}equences. Theory, applications, generalizations}}.
\newblock Cambridge: Cambridge University Press, 2003.

\bibitem{BNT89}
K.~A. Baker, G.~F. McNulty, and W.~Taylor.
\newblock Growth problems for avoidable words.
\newblock {\em Theoret. Comput. Sci.}, 69(3):319--345, 1989.

\bibitem{Blanchet-Sadri:integers}
Francine {Blanchet-Sadri}, James~D. {Currie}, Narad {Rampersad}, and Nathan
  {Fox}.
\newblock {Abelian complexity of fixed point of morphism \(0 \mapsto 012, 1
  \mapsto 02, 2 \mapsto 1\)}.
\newblock {\em {Integers}}, 14:paper a11, 17, 2014.

\bibitem{Crochemore}
Max Crochemore.
\newblock Sharp characterizations of squarefree morphisms.
\newblock {\em Theoret. Comput. Sci.}, 18(2):221--226, 1982.

\bibitem{Currie:Thue}
James~D. {Currie}.
\newblock {Finite test sets for morphisms that are squarefree on some of Thue's
  squarefree ternary words}.
\newblock {\em {J. Integer Seq.}}, 22(8):article 19.8.2, 9, 2019.

\bibitem{Davis-Knuth}
Chandler Davis and Donald.~E. Knuth.
\newblock Number representations and dragon curves - {I}.
\newblock {\em J. Recreational Math.}, 3:61--81, 1970.

\bibitem{Dean}
Richard~A. Dean.
\newblock A sequence without repeats on {$x,$} {$x^{-1},$} {$y,$} {$y^{-1}$}.
\newblock {\em Amer. Math. Monthly}, 72:383--385, 1965.

\bibitem{Hall}
Marshall Hall, Jr.
\newblock Generators and relations in groups---{T}he {B}urnside problem.
\newblock In {\em Lectures on {M}odern {M}athematics, {V}ol. {II}}, pages
  42--92. Wiley, New York, 1964.

\bibitem{Kolpakov2007}
Roman~M. Kolpakov.
\newblock On the number of repetition-free words.
\newblock {\em J. Appl. Ind. Math}, 1:453--462, 2007.

\bibitem{Lothaire}
M.~Lothaire.
\newblock {\em Combinatorics on {W}ords}.
\newblock Cambridge Mathematical Library. Cambridge University Press,
  Cambridge, 1997.

\bibitem{Lothaire2}
M.~{Lothaire}.
\newblock {\em {Algebraic {C}ombinatorics on {W}ords}}, volume~90.
\newblock Cambridge: Cambridge University Press, 2002.

\bibitem{Walnut}
Hamoon Mousavi.
\newblock Walnut software.
\newblock https://cs.uwaterloo.ca/~shallit/walnut.html.

\bibitem{Ochem:2004}
Pascal Ochem.
\newblock A generator of morphisms for infinite words.
\newblock {\em RAIRO: Theoret. Informatics Appl.}, 40:427--441, 2006.

\bibitem{RC}
Matthieu Rosenfeld.
\newblock Avoiding squares over words with lists of size three amongst four
  symbols.
\newblock {\em CoRR}, abs/2104.09965, 2021.

\bibitem{Shallit}
Jeffrey Shallit.
\newblock Electronic mail correspondence, April 2021.

\bibitem{shurgrowthrate}
Arseny~M. Shur.
\newblock Two-sided bounds for the growth rates of power-free languages.
\newblock In Volker Diekert and Dirk Nowotka, editors, {\em Developments in
  Language Theory}, pages 466--477, 2009.

\bibitem{Thue:06}
Axel Thue.
\newblock {\"U}ber unendliche {Z}eichenreihen.
\newblock {\em Norske Vid. Selsk. Skr. I Math-Nat. Kl.}, 7:1--22, 1906.

\bibitem{Thue:12}
Axel Thue.
\newblock {\"U}ber die gegenseitige {L}age gleicher {T}eile gewisser
  {Z}eichenreihen.
\newblock {\em Norske Vid. Selsk. Skr. I Math-Nat. Kl.}, 1:1--67, 1912.

\end{thebibliography}

\end{document}